\newcolumntype{C}[1]{>{\centering\arraybackslash}p{#1}}
\newcommand{\cov}[2]{\mathrm{cov}\left( #1, #2\right)}
\newcommand{\vecs}[1]{\mathrm{vec}\left( #1 \right)}
\newcommand{\vol}[1]{\mathrm{vol}\left( #1 \right)}
\newcommand{\E}[1]{\mathrm{E}\left\{#1\right\}}
\newcommand{\mat}[1]{\left[\begin{matrix} #1 \end{matrix}\right]}
\newcommand{\norm}[1]{\left\lVert#1\right\rVert}
\DeclareMathOperator{\ISOC}{ISOC}
\DeclareMathOperator{\OPT}{OPT}
\DeclareMathOperator{\RoA}{RoA}
\DeclareMathOperator{\comp}{comp}
\DeclareMathOperator{\mean}{mean}
\DeclareMathOperator{\const}{const}
	\newcommand\td[1]{\textcolor{red}{\textbf{TODO:} #1}} 
	\newcommand\tdd[1]{} 
	\newcommand\comment[1]{\textcolor{blue}{\textbf{Comment:} #1}} 
	\newcommand\commentd[1]{} 
	\newcommand\frage[1]{\textcolor{orange}{\textbf{Rückfrage:} #1}} 
	\newcommand\fraged[1]{}
	\newcommand\td[1]{}
	\newcommand\tdd[1]{} 
	\newcommand\comment[1]{} 
	\newcommand\commentd[1]{} 
	\newcommand\frage[1]{}
	\newcommand\fraged[1]{}
\newtheorem{definition}{Definition}
\newtheorem{problem}{Problem}
\newtheorem{lemma}{Lemma}
\newtheorem{theorem}{Theorem}
\newtheorem{remark}{Remark}
\newtheorem{assumption}{Assumption}
\newtheorem{corollary}{Corollary}
\title{\LARGE \bf
	Bi-Level-Based Inverse Stochastic Optimal Control
}
\author{Philipp Karg$^{1}$, Manuel Hess$^{1}$, Balint Varga$^{1}$ and Sören Hohmann$^{1}$
	\thanks{$^{1}$All authors are with the Institute of Control Systems (IRS),
		Karlsruhe Institute of Technology (KIT), 76131 Karlsruhe, Germany. Corresponding author is Philipp Karg, {\tt\small philipp.karg@kit.edu}.}%
}
\begin{document}

\maketitle
\thispagestyle{fancy}
\pagestyle{fancy}

\fancyhf{}
\fancyhead[CO,CE]{\copyright 2024 EUCA. This paper has been accepted for publication and presentation at the 2024 European Control Conference.}

\begin{abstract}
	In this paper, we propose a new algorithm to solve the Inverse Stochastic Optimal Control (ISOC) problem of the linear-quadratic sensorimotor (LQS) control model. The LQS model represents the current state-of-the-art in describing goal-directed human movements. The ISOC problem aims at determining the cost function and noise scaling matrices of the LQS model from measurement data since both parameter types influence the statistical moments predicted by the model and are unknown in practice. We prove global convergence for our new algorithm and at a numerical example, validate the theoretical assumptions of our method. By comprehensive simulations, the influence of the tuning parameters of our algorithm on convergence behavior and computation time is analyzed. The new algorithm computes ISOC solutions nearly $33$ times faster than the single previously existing ISOC algorithm.
\end{abstract}


\section{Introduction} \label{sec:introduction}

Inverse Optimal Control (IOC) methods \cite{Molloy.2022} have gained significant research interest in the last years, from a theoretical- as well as application-oriented perspective. IOC methods answer the inverse question of OC problems: when are observed trajectories or control laws optimal, i.e. the result of an OC solution. Hence, IOC approaches aim at identifying the unknown cost function from observed trajectories, the so-called ground truth (GT) data, that are assumed to be optimal. An important application of IOC methods is the identification of goal-directed human movements. While there are strong indications (see e.g. \cite{Todorov.2002,Uno.1989}) that goal-directed human movements can be described by an OC model, the corresponding cost function is unknown in practice and needs to be identified from measurement data to verify the model hypothesis or to use the identified model for the design of human-machine systems (e.g. prediction, classification or support of human movements\comment{imitation possible as well}). For deterministic OC models (see e.g. \cite{Uno.1989}), IOC methods are applied with these goals for example in \cite{Jin.2019,Oguz.2018,Berret.2011}. However, deterministic OC models describe human movements by the so-called feedforward planning approach \cite{Gallivan.2018}. Via the OC model only the process of specifying a desired trajectory by the human is described. This movement planning stage follows a separated execution stage where the desired trajectory is tracked. Hence, sensory feedback on the movement trajectory is not taken into account adequately. Currently, feedback control approaches \cite{Gallivan.2018} based on stochastic optimal control (SOC) models are more promising since they consider the continuous sensory feedback of the human on its movement trajectories and are able to explain the stochastic nature of them \cite{Gallivan.2018,Todorov.2002}. The current main representative of these approaches is the linear-quadratic sensorimotor (LQS) control model, which is derived from the linear-quadratic Gaussian (LQG) model by adding a control-dependent noise process to the state and a state-dependent noise process to the output equation. Both extensions are used to reflect the special characteristics of the stochastic nature of human movements, i.e. faster movements are performed and perceived more inaccurately \cite{Todorov.2005,Todorov.2002}\footnote{With the LQS model, the human hand is typically modeled as point mass since the biomechanics of the complete human arm are nonlinear in general.}.

As for deterministic OC models, for SOC models an identification method is needed to determine their unknown parameters from measurement/GT data in order to verify the model hypothesis in a specific task in a data-driven manner \cite{Karg.2023b} or to use the identified models for the design of human-machine systems. Until very recently \cite{Karg.2023a}, an approach that solves this Inverse Stochastic Optimal Control (ISOC) problem for the LQS model was missing and only impractical special cases, like the LQG model \cite{Priess.2014} or well-selected parameters of the LQS model \cite{Kolekar.2018}, were considered so far. For the LQS model, the ISOC problem consists of identifying the cost function and noise scaling matrices since both parameter types influence the predicted statistical moments of the system quantities. However, our previous method \cite{Karg.2023a} still shows high computation times (nearly one day on a standard PC) and lacks a guaranteed, proved global convergence behavior. Both challenges are solved in this paper by proposing a new ISOC algorithm with proved global convergence, which solves the ISOC problem in a simulation example nearly $33$ times faster than our previous method \cite{Karg.2023a}. Based on the simulation example, we furthermore validate the theoretical assumptions of our new approach and analyze the influence of its tuning parameters on convergence behavior and computation time.


\section{Problem Definition} \label{sec:problem_definition}


As explained in Section~\ref{sec:introduction}, when describing the human biomechanics with a state equation, we need to consider a control-dependent noise process $\{\sum_{i=1}^{c} \sigma_i^{\bm{u}} \varepsilon_t^{(i)}  \bm{B} \bm{F}_i \bm{u}_t\}$:
\begin{align} \label{eq:state_equation}
	\bm{x}_{t+1} = \bm{A}\bm{x}_t + \bm{B}\bm{u}_t + \bm{\Sigma}^{\bm{\alpha}}\bm{\alpha}_t + \sum_{i=1}^{c} \sigma_i^{\bm{u}} \varepsilon_t^{(i)}  \bm{B} \bm{F}_i \bm{u}_t,
\end{align}
where $\bm{x}\in\mathbb{R}^n$ denotes the system state, $\bm{u}\in\mathbb{R}^m$ the control variable, $\{\bm{\alpha}_t\}$ a standard white Gaussian noise process in sample space $\mathbb{R}^p$ and $\{\bm{\varepsilon}_t\}$ ($\bm{\varepsilon}^\intercal_t = \mat{\varepsilon_t^{(1)} & \dots & \varepsilon_t^{(c)}}$) a standard white Gaussian noise process in sample space $\mathbb{R}^c$. Furthermore, $\bm{A}$, $\bm{B}$, $\bm{\Sigma}^{\bm{\alpha}}$ and $\bm{F}_i$ are matrices of appropriate dimension, where $\bm{A}$ and $\bm{B}$ may depend on time. The random variables $\varepsilon_t^{(i)}$ are scaled with a constant $\sigma_i^{\bm{u}} \in \mathbb{R}$ and with $\bm{u}_t$ \cite{Todorov.2005}. The stochastic process $\{\bm{x}_t\}$ is initialized with $\E{\bm{x}_0}$ and $\cov{\bm{x}_0}{\bm{x}_0} = \bm{\Omega}_0^{\bm{x}}$.
Human perception is described by an output equation with a state-dependent noise process $\{\sum_{i=1}^{d} \sigma_i^{\bm{x}} \epsilon_t^{(i)} \bm{H} \bm{G}_i \bm{x}_t\}$ in the LQS model:
\begin{align} \label{eq:output_equation}
	\bm{y}_t = \bm{H}\bm{x}_t + \bm{\Sigma}^{\bm{\beta}}\bm{\beta}_t + \sum_{i=1}^{d} \sigma_i^{\bm{x}} \epsilon_t^{(i)} \bm{H} \bm{G}_i \bm{x}_t,
\end{align}
where $\bm{y}\in\mathbb{R}^r$ denotes the observed output, $\{\bm{\beta}_t\}$ a standard white Gaussian noise process in sample space $\mathbb{R}^q$ and $\{\bm{\epsilon}_t\}$ ($\bm{\epsilon}^\intercal_t = \mat{\epsilon_t^{(1)} & \dots & \epsilon_t^{(d)}}$) a standard white Gaussian noise process in sample space $\mathbb{R}^d$. Moreover, $\bm{H}$, $\bm{\Sigma}^{\bm{\beta}}$ and $\bm{G}_i$ are matrices of appropriate dimension, where $\bm{H}$ may depend on time. Similarly to \eqref{eq:state_equation}, the random variables $\epsilon_t^{(i)}$ are scaled by a constant $\sigma_i^{\bm{x}} \in \mathbb{R}$ and $\bm{x}_t$ \cite{Todorov.2005}.
The random variables $\bm{x}_t$, $\bm{\alpha}_t$, $\bm{\varepsilon}_t$, $\bm{\beta}_t$ and $\bm{\epsilon}_t$ at time $t$ are assumed to be independent to each other.
The performance criterion which drives the goal-directed human movements is given by 
\begin{align} \label{eq:cost_function}
	J = \E{\bm{x}^\intercal_N \bm{Q}_N \bm{x}_N + \sum_{t=0}^{N-1} \bm{x}_t^\intercal \bm{Q}_t \bm{x}_t + \bm{u}_t^\intercal \bm{R} \bm{u}_t},
\end{align}
where $\bm{Q}_N = \sum_{i=1}^{S_N} s_{N,i}\bm{q}_{N,i}\bm{q}_{N,i}^\intercal$ ($\bm{q}_{N,i} \in \mathbb{R}^n$, $s_{N,i} \in \mathbb{R}$), $\bm{Q}_t = \sum_{i=1}^{S_Q} s_{Q,i}\bm{q}_{Q,t,i}\bm{q}_{Q,t,i}^\intercal$ ($\bm{q}_{Q,t,i} \in \mathbb{R}^n$, $s_{Q,i} \in \mathbb{R}$) and $\bm{R} = \sum_{i=1}^{S_R} s_{R,i}\bm{q}_{R,i}\bm{q}_{R,i}^\intercal$ ($\bm{q}_{R,i} \in \mathbb{R}^m$, $s_{R,i} \in \mathbb{R}$).

Solving the LQS optimal control Problem~\ref{problem:SOC} models the optimization performed by the human in its perception-action cycle.
\begin{problem} \label{problem:SOC}
	Find an admissible control strategy $\bm{u}_t = \bm{\pi}_t(\bm{u}_0,\dots,\bm{u}_{t-1},\bm{y}_0,\dots,\bm{y}_{t-1})$ for the system defined by \eqref{eq:state_equation} and \eqref{eq:output_equation} such that \eqref{eq:cost_function} is minimal.
\end{problem}

Due to the multiplicative noise processes in \eqref{eq:state_equation} and \eqref{eq:output_equation}, the separation theorem does not hold \cite{Liang.2023,Todorov.2005,Moore.1999,Joshi.1976}. Following the procedure proposed in \cite{Todorov.2005}, Lemma~\ref{lemma:solution_SOC} shows the solution of Problem~\ref{problem:SOC} considered throughout the paper which takes the interdependence between controller and filter into account.
\begin{lemma} \label{lemma:solution_SOC}
	Let $\bm{R}$ and $\bm{\Sigma}^{\bm{\beta}}{\bm{\Sigma}^{\bm{\beta}}}^\intercal$ be positive definite. Furthermore, let the history of control and output values for the admissible control strategies $\bm{u}_t = \bm{\pi}_t(\bm{u}_0,\dots,\bm{u}_{t-1},\bm{y}_0,\dots,\bm{y}_{t-1})$ (cf.~Problem~\ref{problem:SOC}) be represented by the estimation $\hat{\bm{x}}_t$ of a linear filter:
	\begin{align} \label{eq:assm_linear_filter}
		\hat{\bm{x}}_{t+1} = \bm{A}\hat{\bm{x}}_t + \bm{B}\bm{u}_t + \bm{K}_t \left( \bm{y}_t - \bm{H}\hat{\bm{x}}_t \right) + \bm{\Sigma}^{\bm{\gamma}}\bm{\gamma}_t,
	\end{align}
	where $\bm{K}_t$ ($\forall t \in \{0,\dots,N-1\}$) are constant filter matrices of appropriate dimension and $\{\bm{\gamma}_t\}$\footnote{The random variables $\bm{\gamma}_t$ are independent to $\bm{x}_t$, $\bm{\alpha}_t$, $\bm{\varepsilon}_t$, $\bm{\beta}_t$ and $\bm{\epsilon}_t$.} a standard white Gaussian noise process in $\mathbb{R}^{l}$ scaled by a constant matrix $\bm{\Sigma}^{\bm{\gamma}}$ of appropriate dimension.
	Then, the optimal control law is given by $\bm{u}_t = \bm{\pi}_t(\hat{\bm{x}}_t) = - \bm{L}_t \hat{\bm{x}}_t$ with\footnote{Recursive formulas for $\bm{Z}_t^{\bm{x}}$ and $\bm{Z}_t^{\bm{e}}$ can be found in \cite{Todorov.2005}.}
	\begin{align}
		\bm{L}_{t} &= \Big(\bm{R} + \bm{B}^{\intercal} \bm{Z}_{t+1}^{\bm{x}} \bm{B} \nonumber\\
		&\hphantom{=} + \sum_{i} (\sigma_i^{\bm{u}})^2 \bm{F}_i^{\intercal} \bm{B}^{\intercal} \left( \bm{Z}_{t+1}^{\bm{x}} \! + \! \bm{Z}_{t+1}^{\bm{e}} \right) \bm{B} \bm{F}_i \Big)^{-1} \bm{B}^{\intercal} \bm{Z}_{t+1}^{\bm{x}} \bm{A} \label{eq:SensoControlLawL}. 
	\end{align}
	Moreover, the optimal constant filter matrices $\bm{K}_t$ are given by\footnote{Recursive formulas for $\bm{P}_t^{\hat{\bm{x}}}$, $\bm{P}_t^{\bm{e}}$ and $\bm{P}_t^{\hat{\bm{x}}\bm{e}} = \bm{P}_t^{\bm{e}\hat{\bm{x}}}$ can be found in \cite{Todorov.2005}.}
	\begin{align}
		\bm{K}_{t} &= \bm{A} \bm{P}_{t}^{\bm{e}} \bm{H}^{\intercal} \Big( \bm{H} \bm{P}_{t}^{\bm{e}} \bm{H}^{\intercal} + \bm{\Sigma}^{\bm{\beta}}{\bm{\Sigma}^{\bm{\beta}}}^{\intercal} \nonumber \\
		&\hphantom{=} + \sum_{i} (\sigma_i^{\bm{x}})^2\bm{H}\bm{G}_i \left( \bm{P}_t^{\bm{e}} \! + \! \bm{P}_t^{\hat{\bm{x}}} \! + \! \bm{P}_t^{\hat{\bm{x}}\bm{e}} \! + \! \bm{P}_t^{\bm{e}\hat{\bm{x}}} \right) \bm{G}_i^{\intercal}\bm{H}^{\intercal} \Big)^{-1} \label{eq:SensoEstimatorK}.
	\end{align}
\end{lemma}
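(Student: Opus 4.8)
The plan is to follow the coupled controller--estimator derivation of \cite{Todorov.2005}, adapted to the present notation. First I would introduce the estimation error $\bm{e}_t = \bm{x}_t - \hat{\bm{x}}_t$ and, combining \eqref{eq:state_equation}, \eqref{eq:output_equation} and the filter recursion \eqref{eq:assm_linear_filter}, write the joint dynamics of the augmented state $\mat{\hat{\bm{x}}_t^\intercal & \bm{e}_t^\intercal}^\intercal$. The key observation is that the control-dependent noise in \eqref{eq:state_equation} injects a term quadratic in $\bm{u}_t$ into the one-step covariance, while the state-dependent noise in \eqref{eq:output_equation} injects a term quadratic in $\bm{x}_t$ into the innovation covariance; both survive the expectation because the $\varepsilon_t^{(i)}$ and $\epsilon_t^{(i)}$ are zero-mean, unit-variance and mutually independent. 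Since the separation theorem fails, I treat the controller and the estimator as a coupled pair and determine each under the assumption that the other is fixed, exactly as the interdependence asserted in the lemma demands.

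For the controller I would posit the quadratic cost-to-go ansatz
\[
	v_t = \hat{\bm{x}}_t^\intercal \bm{Z}_t^{\bm{x}} \hat{\bm{x}}_t + \bm{e}_t^\intercal \bm{Z}_t^{\bm{e}} \bm{e}_t + \const_t
\]
and verify by backward induction, starting from the terminal values determined by $\bm{Q}_N$, that the Bellman equation preserves this form. Substituting \eqref{eq:cost_function}, \eqref{eq:assm_linear_filter} and the augmented dynamics into $\E{\bm{x}_t^\intercal \bm{Q}_t \bm{x}_t + \bm{u}_t^\intercal \bm{R} \bm{u}_t + v_{t+1}}$ and collecting the terms that depend on $\bm{u}_t$ yields a strictly convex quadratic in $\bm{u}_t$ whose Hessian is $\bm{R} + \bm{B}^\intercal \bm{Z}_{t+1}^{\bm{x}} \bm{B} + \sum_i (\sigma_i^{\bm{u}})^2 \bm{F}_i^\intercal \bm{B}^\intercal (\bm{Z}_{t+1}^{\bm{x}} + \bm{Z}_{t+1}^{\bm{e}}) \bm{B} \bm{F}_i$, which is positive definite because $\bm{R} \succ 0$ and the remaining summands are positive semidefinite. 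Setting the gradient to zero then gives the linear law $\bm{u}_t = -\bm{L}_t \hat{\bm{x}}_t$ with $\bm{L}_t$ as in \eqref{eq:SensoControlLawL}, and reinserting the minimiser produces the backward recursions for $\bm{Z}_t^{\bm{x}}$ and $\bm{Z}_t^{\bm{e}}$ stated in \cite{Todorov.2005}.

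For the estimator I would instead propagate covariances forward. Fixing the control gains, I form the second moments $\bm{P}_t^{\hat{\bm{x}}}$, $\bm{P}_t^{\bm{e}}$ and $\bm{P}_t^{\hat{\bm{x}}\bm{e}}$ from the augmented dynamics, where the state-dependent output noise contributes the term $\sum_i (\sigma_i^{\bm{x}})^2 \bm{H}\bm{G}_i(\bm{P}_t^{\bm{e}} + \bm{P}_t^{\hat{\bm{x}}} + \bm{P}_t^{\hat{\bm{x}}\bm{e}} + \bm{P}_t^{\bm{e}\hat{\bm{x}}})\bm{G}_i^\intercal \bm{H}^\intercal$ to the innovation covariance. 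Minimising the trace of the propagated error covariance $\bm{P}_{t+1}^{\bm{e}}$ over $\bm{K}_t$ --- equivalently, completing the square in $\bm{K}_t$ --- gives the Kalman-type gain \eqref{eq:SensoEstimatorK}; the matrix to be inverted is positive definite because $\bm{\Sigma}^{\bm{\beta}}{\bm{\Sigma}^{\bm{\beta}}}^\intercal \succ 0$ and the multiplicative-noise contributions are positive semidefinite, which is precisely where the definiteness hypotheses of the lemma enter.

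The main obstacle is the coupling created by the absent separation principle: $\bm{L}_t$ depends on the estimator quantity $\bm{Z}_{t+1}^{\bm{e}}$ (hence on the $\bm{K}_t$) while $\bm{K}_t$ depends on the controller-driven covariances $\bm{P}_t^{\hat{\bm{x}}}$ and $\bm{P}_t^{\hat{\bm{x}}\bm{e}}$ (hence on the $\bm{L}_t$). The backward $\bm{Z}$-recursion and the forward $\bm{P}$-recursion can therefore not be solved in a single sweep. I would resolve this by treating \eqref{eq:SensoControlLawL} and \eqref{eq:SensoEstimatorK} as mutually consistent stationarity conditions and computing them by alternating the backward and forward passes to a fixed point, as in \cite{Todorov.2005}; arguing that this coupled iteration is well posed and that the resulting pair $(\bm{L}_t, \bm{K}_t)$ indeed attains the minimum of \eqref{eq:cost_function} within the assumed linear-filter class is the delicate part of the argument.
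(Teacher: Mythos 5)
Your overall route is the same as the paper's: both follow \cite{Todorov.2005} by fixing the filter gains $\bm{K}_t$ and deriving $\bm{L}_t$ via backward induction on a quadratic cost-to-go, then fixing $\bm{L}_t$ and deriving $\bm{K}_t$ from a forward second-moment recursion by completing the square, with the lost separation principle handled coordinate-wise; the fixed-point question you raise at the end is likewise not resolved inside the paper's proof but deferred to the discussion following the lemma, where it is treated as alternating optimization in the sense of \cite{Bezdek.2002}. Your estimator criterion (trace of $\bm{P}_{t+1}^{\bm{e}}$) differs superficially from the paper's ($\bm{K}_t = \arg\min_{\bm{K}_t} \E{V_{t+1}^*(\hat{\bm{x}}_{t+1})}$ with $\bm{L}_t$ fixed), but both lead to \eqref{eq:SensoEstimatorK}, since the Kalman-type gain minimizes the error covariance in the positive-semidefinite order and hence any positive-semidefinite-weighted scalar criterion simultaneously.

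The genuine gap is your cost-to-go ansatz. You posit $v_t = \hat{\bm{x}}_t^\intercal \bm{Z}_t^{\bm{x}} \hat{\bm{x}}_t + \bm{e}_t^\intercal \bm{Z}_t^{\bm{e}} \bm{e}_t + \const_t$, a quadratic that is diagonal in $(\hat{\bm{x}}_t, \bm{e}_t)$, whereas the paper (and \cite{Todorov.2005}) use $V_t = \bm{x}_t^\intercal \bm{Z}_t^{\bm{x}} \bm{x}_t + \bm{e}_t^\intercal \bm{Z}_t^{\bm{e}} \bm{e}_t + z_t$, diagonal in $(\bm{x}_t, \bm{e}_t)$. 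Your form is not closed under the Bellman recursion: it fails already at the terminal step, since $V_N = \bm{x}_N^\intercal \bm{Q}_N \bm{x}_N = (\hat{\bm{x}}_N + \bm{e}_N)^\intercal \bm{Q}_N (\hat{\bm{x}}_N + \bm{e}_N)$ contains the cross term $2\hat{\bm{x}}_N^\intercal \bm{Q}_N \bm{e}_N$, which your ansatz cannot represent. More damagingly, it is inconsistent with the Hessian you claim. The control-dependent noise $\sum_i \sigma_i^{\bm{u}} \varepsilon_t^{(i)} \bm{B}\bm{F}_i \bm{u}_t$ enters $\bm{x}_{t+1}$ but not $\hat{\bm{x}}_{t+1}$ (the filter \eqref{eq:assm_linear_filter} contains only the deterministic term $\bm{B}\bm{u}_t$), hence it appears only in $\bm{e}_{t+1} = \bm{x}_{t+1} - \hat{\bm{x}}_{t+1}$. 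Under your parametrization the $\bm{u}_t$-quadratic part of the Bellman step is therefore $\bm{u}_t^\intercal \bigl(\bm{R} + \bm{B}^\intercal \bm{Z}_{t+1}^{\bm{x}} \bm{B} + \sum_i (\sigma_i^{\bm{u}})^2 \bm{F}_i^\intercal \bm{B}^\intercal \bm{Z}_{t+1}^{\bm{e}} \bm{B}\bm{F}_i\bigr)\bm{u}_t$ --- the $\bm{Z}_{t+1}^{\bm{x}}$ inside the sum is missing --- so you would not obtain \eqref{eq:SensoControlLawL}. In the paper's parametrization the stage cost involves $\bm{x}_t$ alone (no cross term arises from it), and the control-dependent noise enters both $\bm{x}_{t+1}$ and $\bm{e}_{t+1}$, which is exactly what produces the factor $\bm{Z}_{t+1}^{\bm{x}} + \bm{Z}_{t+1}^{\bm{e}}$ in \eqref{eq:SensoControlLawL}. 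Replace your ansatz by the paper's and the rest of your argument goes through essentially unchanged.
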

\begin{proof}
	With the assumption on a linear filter providing a sufficient statistic for the history of control and output values, we can derive a Bellman equation based on \cite[Lemma~3.2, p.~261]{Astrom.1970}. Minimizing $J$~\eqref{eq:cost_function} w.r.t. the admissible control strategy $\bm{\pi}_0,\bm{\pi}_1,\dots$ yields
	\begin{align} \label{eq:solution_ISOC_proof_1}
		\min_{\bm{\pi}_0,\dots} J = \E{ \min_{\bm{\pi}_0,\dots} \E{ \E{ J \mid \bm{x}_0, \bm{\hat{x}}_0 } \mid \hat{\bm{x}}_0 } }.
	\end{align}
	By defining a value function
	\begin{align} \label{eq:solution_ISOC_proof_2}
		V_t(\bm{x}_t, \hat{\bm{x}}_t) &= \text{E} \biggl\{ \bm{x}^\intercal_N \bm{Q}_N \bm{x}_N \nonumber \\
			&\hphantom{=}+ \sum_{\tau=t}^{N-1} \bm{x}_\tau^\intercal \bm{Q}_\tau \bm{x}_\tau \! + \! \bm{u}_\tau^\intercal \bm{R} \bm{u}_\tau \mid \bm{x}_t, \hat{\bm{x}}_t \biggl\}
	\end{align} 
	and its optimized version $V_t^*(\hat{\bm{x}}_t) = \min_{\bm{\pi}_t,\dots}\E{V_t(\bm{x}_t, \hat{\bm{x}}_t)\mid \hat{\bm{x}}_t }$, from \eqref{eq:solution_ISOC_proof_1} the recursive Bellman equation
	\begin{align} \label{eq:solution_ISOC_proof_3}
		V_t(\bm{x}_t, \hat{\bm{x}}_t) &= \text{E} \{ \bm{x}^\intercal_t \bm{Q}_t \bm{x}_t \! + \! \bm{u}^\intercal_t \bm{R} \bm{u}_t \nonumber \\
		&\hphantom{=}+ V_{t+1}(\bm{x}_{t+1}, \hat{\bm{x}}_{t+1}) \mid \bm{x}_t, \hat{\bm{x}}_t \}
	\end{align}
	follows. Now, it can be shown by induction (backwards, starting with $t=N$) that the value function $V_t$ is quadratic, i.e. $V_t(\bm{x}_t, \hat{\bm{x}}_t) = \bm{x}_t^\intercal \bm{Z}_t^{\bm{x}} \bm{x}_t + \bm{e}_t^\intercal \bm{Z}_t^{\bm{e}} \bm{e}_t + z_t$ ($\bm{e}_t = \bm{x}_t - \hat{\bm{x}}_t$) \cite{Todorov.2005}. Moreover, by computing $V_t^*(\hat{\bm{x}}_t)$ from $V_t(\bm{x}_t, \hat{\bm{x}}_t)$, the linear control law $\bm{\pi}_t(\hat{\bm{x}}_t) = -\bm{L}_t\hat{\bm{x}}_t$ with \eqref{eq:SensoControlLawL} results. The calculation of $V_t^*$ and $\bm{\pi}_t$ is performed using \eqref{eq:solution_ISOC_proof_3} and assuming a given, fixed sequence of filter gains $\bm{K}_t$. In order to find the optimal filter matrices, $J$ is minimized w.r.t. to $\bm{K}_0, \bm{K}_1, \dots$ with assumed fixed control matrices $\bm{L}_t$. This can be done via forward recursion $\bm{K}_t = \arg \min_{\bm{K}_t}\E{V_{t+1}^*(\hat{\bm{x}}_{t+1})}$ ($\forall t \in \{0,\dots,N-1\}$), which yields \eqref{eq:SensoEstimatorK}.
\end{proof}

Since $\bm{L}_t$ is derived under the assumption of specific filter gains $\bm{K}_t$ and the filter gains vice versa, their calculation is done via an iterative procedure starting with a guess for $\bm{K}_t$. This leads to an alternating optimization (AO) \cite{Bezdek.2002}. In each AO optimization (with respect to $\bm{L}_t$ or $\bm{K}_t$) a unique global optimizer exists due to $\bm{R}$ and $\bm{\Sigma}^{\bm{\beta}}{\bm{\Sigma}^{\bm{\beta}}}^\intercal$ being positive definite. If in one AO optimization step the global optimal solution for $\bm{L}_t$ or $\bm{K}_t$ results, global convergence for the other parameter type follows \cite[Theorem~2]{Bezdek.2002}. This behavior is guaranteed e.g. in the LQG case (separation theorem). 

The statistical moments of $\{\bm{x}_t\}$ according to Theorem~\ref{theorem:mean_covariance_SOC} characterize the predictions of the LQS model. 
\begin{theorem} \label{theorem:mean_covariance_SOC}
	Applying the optimal control strategy according to Lemma~\ref{lemma:solution_SOC} to the system defined by \eqref{eq:state_equation} and \eqref{eq:output_equation} leads to the mean $\E{\bm{x}_t}$ and covariance $\bm{\Omega}_t^{\bm{x}} = \cov{\bm{x}_t}{\bm{x}_t}$:
	\begin{align}
		\mat{\E{\bm{x}_{t+1}} \\ \E{\hat{\bm{x}}_{t+1}}} &= \bm{\mathcal{A}}_t \mat{\E{\bm{x}_{t}} \\ \E{\hat{\bm{x}}_{t}}} \label{eq:SensoSolutionEW}, \\
		\mat{\bm{\Omega}_{t+1}^{\bm{x}} \!\!& \bm{\Omega}_{t+1}^{\bm{x}\hat{\bm{x}}} \\ \bm{\Omega}_{t+1}^{\hat{\bm{x}}\bm{x}} \!\!& \bm{\Omega}_{t+1}^{\hat{\bm{x}}}} &= \bm{\mathcal{A}}_t \mat{\bm{\Omega}_{t}^{\bm{x}} \!\!\!\!& \bm{\Omega}_{t}^{\bm{x}\hat{\bm{x}}} \\ \bm{\Omega}_{t}^{\hat{\bm{x}}\bm{x}} \!\!\!\!& \bm{\Omega}_{t}^{\hat{\bm{x}}}} \bm{\mathcal{A}}^\intercal_{t} \nonumber \\		
		&\hphantom{=} + \mat{\bm{\Omega}^{\bm{\xi}} \!\!\!\!& \bm{0} \\ \bm{0} \!\!\!\!& \bm{\Omega}^{\bm{\eta}} + \bm{K}_{t} \bm{\Omega}^{\bm{\omega}} \bm{K}^{\intercal}_{t}} + \mat{\bar{\bm{\Omega}}_t^{\hat{\bm{x}}} \!\!\!\!& \bm{0} \\ \bm{0} \!\!\!\!& \bar{\bm{\Omega}}_t^{\bm{x}}} \label{eq:SensoSolutionCOV},
	\end{align}	
	with $\bm{\Omega}_t^{\bm{\xi}} = \bm{\Sigma}^{\bm{\alpha}}{\bm{\Sigma}^{\bm{\alpha}}}^\intercal$, $\bm{\Omega}_t^{\bm{\eta}} = \bm{\Sigma}^{\bm{\gamma}}{\bm{\Sigma}^{\bm{\gamma}}}^\intercal$, $\bm{\Omega}_t^{\bm{\omega}} = \bm{\Sigma}^{\bm{\beta}}{\bm{\Sigma}^{\bm{\beta}}}^\intercal$, $\bar{\bm{\Omega}}_t^{\hat{\bm{x}}} = \sum_{i} (\sigma_i^{\bm{u}})^2 \bm{B}\bm{F}_i \bm{L}_t \left( \bm{\Omega}_{t}^{\hat{\bm{x}}} + \E{\hat{\bm{x}}_t}\E{\hat{\bm{x}}_t}^{\intercal} \right) \bm{L}_t^{\intercal} \bm{F}_i^{\intercal} \bm{B}^\intercal$, $\bar{\bm{\Omega}}_t^{\bm{x}} = \sum_{i} (\sigma_i^{\bm{x}})^2\bm{K}_t \bm{H}\bm{G}_i \left( \bm{\Omega}_{t}^{\bm{x}} + \E{\bm{x}_t}\E{\bm{x}_t}^{\intercal} \right) \bm{G}_i^{\intercal}\bm{H}^\intercal \bm{K}_t^{\intercal}$, 
	\begin{align} \label{eq:mathcalA}
		\bm{\mathcal{A}}_{t} = \begin{bmatrix} \bm{A} & -\bm{B} \bm{L}_{t} \\ \bm{K}_{t} \bm{H} & \bm{A} - \bm{K}_{t} \bm{H} - \bm{B} \bm{L}_{t} \end{bmatrix}
	\end{align}
	and $\E{\hat{\bm{x}}_0}=\E{\bm{x}_0}$, $\bm{\Omega}_t^{\hat{\bm{x}}\bm{x}}=\bm{\Omega}_t^{\bm{x}\hat{\bm{x}}}=\bm{\Omega}_t^{\hat{\bm{x}}}=\bm{0}$.
\end{theorem}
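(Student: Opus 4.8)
The plan is to stack the state and its estimate into an augmented vector $\bm{\zeta}_t = \mat{\bm{x}_t \\ \hat{\bm{x}}_t}$ and to derive a linear recursion for it, from which the mean and covariance follow. First I would substitute the optimal control law $\bm{u}_t = -\bm{L}_t\hat{\bm{x}}_t$ from Lemma~\ref{lemma:solution_SOC} into the state equation \eqref{eq:state_equation}, and substitute both this law and the output equation \eqref{eq:output_equation} into the filter \eqref{eq:assm_linear_filter}. Collecting the deterministic part yields precisely the block matrix $\bm{\mathcal{A}}_t$ of \eqref{eq:mathcalA}, so that
\[
	\bm{\zeta}_{t+1} = \bm{\mathcal{A}}_t\bm{\zeta}_t + \bm{w}_t + \bm{m}_t,
\]
where $\bm{w}_t = \mat{\bm{\Sigma}^{\bm{\alpha}}\bm{\alpha}_t \\ \bm{K}_t\bm{\Sigma}^{\bm{\beta}}\bm{\beta}_t + \bm{\Sigma}^{\bm{\gamma}}\bm{\gamma}_t}$ collects the additive noise, and $\bm{m}_t$ collects the two multiplicative processes, its upper block being $-\sum_i \sigma_i^{\bm{u}}\varepsilon_t^{(i)}\bm{B}\bm{F}_i\bm{L}_t\hat{\bm{x}}_t$ and its lower block $\bm{K}_t\sum_i\sigma_i^{\bm{x}}\epsilon_t^{(i)}\bm{H}\bm{G}_i\bm{x}_t$.

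The mean recursion \eqref{eq:SensoSolutionEW} then follows immediately by taking expectations. Since $\bm{\alpha}_t,\bm{\beta}_t,\bm{\gamma}_t$ are zero-mean, $\E{\bm{w}_t}=\bm{0}$; and since each scalar noise $\varepsilon_t^{(i)},\epsilon_t^{(i)}$ is zero-mean and independent of $\bm{x}_t,\hat{\bm{x}}_t$ (which depend only on noise realizations up to time $t-1$), every summand of $\bm{m}_t$ factorizes as $\E{\varepsilon_t^{(i)}}\E{\cdots}=\bm{0}$, so $\E{\bm{m}_t}=\bm{0}$ as well, leaving only the $\bm{\mathcal{A}}_t$ term.

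For the covariance \eqref{eq:SensoSolutionCOV} I would center the recursion, writing $\bm{\zeta}_{t+1}-\E{\bm{\zeta}_{t+1}} = \bm{\mathcal{A}}_t(\bm{\zeta}_t-\E{\bm{\zeta}_t}) + \bm{w}_t + \bm{m}_t$, form the outer product, and take expectations. All cross terms vanish: $\bm{w}_t$ is independent of $\bm{\zeta}_t$ and of $\bm{m}_t$ (distinct noise processes), and each summand of $\bm{m}_t$ carries an independent zero-mean scalar factor, so $\E{(\bm{\zeta}_t-\E{\bm{\zeta}_t})\bm{m}_t^\intercal}=\bm{0}$. The surviving contributions are $\bm{\mathcal{A}}_t\bm{\Sigma}_t\bm{\mathcal{A}}_t^\intercal$ (with $\bm{\Sigma}_t$ the augmented covariance appearing on the left of \eqref{eq:SensoSolutionCOV}), the additive block $\E{\bm{w}_t\bm{w}_t^\intercal}$, which is block-diagonal because $\bm{\alpha}_t,\bm{\beta}_t,\bm{\gamma}_t$ are mutually independent and thus yields the $\bm{\Omega}^{\bm{\xi}}$ and $\bm{\Omega}^{\bm{\eta}}+\bm{K}_t\bm{\Omega}^{\bm{\omega}}\bm{K}_t^\intercal$ entries, and the multiplicative block $\E{\bm{m}_t\bm{m}_t^\intercal}$.

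The main obstacle is this last, multiplicative block. Here I would use $\E{\varepsilon_t^{(i)}\varepsilon_t^{(j)}}=\delta_{ij}$ and $\E{\epsilon_t^{(i)}\epsilon_t^{(j)}}=\delta_{ij}$ together with independence of these scalars from $\bm{x}_t,\hat{\bm{x}}_t$ to collapse the double sums to single sums and to factorize $\E{\varepsilon_t^{(i)}\varepsilon_t^{(i)}\hat{\bm{x}}_t\hat{\bm{x}}_t^\intercal}=\E{\hat{\bm{x}}_t\hat{\bm{x}}_t^\intercal}$; the cross terms between the $\varepsilon$- and $\epsilon$-blocks vanish since $\E{\varepsilon_t^{(i)}\epsilon_t^{(j)}}=0$, keeping this block diagonal. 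The crucial subtlety is that the multiplicative noise multiplies the \emph{uncentered} state, so its contribution involves the full second moment $\E{\hat{\bm{x}}_t\hat{\bm{x}}_t^\intercal}=\bm{\Omega}_t^{\hat{\bm{x}}}+\E{\hat{\bm{x}}_t}\E{\hat{\bm{x}}_t}^\intercal$ (and likewise $\bm{\Omega}_t^{\bm{x}}+\E{\bm{x}_t}\E{\bm{x}_t}^\intercal$) rather than the covariance alone; this is exactly what produces the terms $\bar{\bm{\Omega}}_t^{\hat{\bm{x}}}$ and $\bar{\bm{\Omega}}_t^{\bm{x}}$. Collecting the three surviving blocks reproduces \eqref{eq:SensoSolutionCOV}, and the stated initializations $\E{\hat{\bm{x}}_0}=\E{\bm{x}_0}$ and $\bm{\Omega}_0^{\hat{\bm{x}}\bm{x}}=\bm{\Omega}_0^{\bm{x}\hat{\bm{x}}}=\bm{\Omega}_0^{\hat{\bm{x}}}=\bm{0}$ anchor the recursion, completing the argument.
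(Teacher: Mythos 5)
Your proposal is correct. Note that the paper itself gives no inline argument for this theorem: its ``proof'' is a pointer to Lemma~2 of the earlier reference \cite{Karg.2023a}, so there is nothing in this document to compare against step by step; your derivation is the natural self-contained one (and is, in substance, the standard argument such a citation hides). All the key points check out: substituting $\bm{u}_t=-\bm{L}_t\hat{\bm{x}}_t$ into \eqref{eq:state_equation} and $\bm{u}_t$, \eqref{eq:output_equation} into \eqref{eq:assm_linear_filter} does yield the augmented recursion with the deterministic block matrix \eqref{eq:mathcalA}; the mean recursion follows because $\varepsilon_t^{(i)}$, $\epsilon_t^{(i)}$ are zero-mean, white, and hence independent of $\bm{x}_t$, $\hat{\bm{x}}_t$; all cross terms in the outer product vanish for the same reason together with the assumed mutual independence of $\bm{\alpha}_t$, $\bm{\beta}_t$, $\bm{\gamma}_t$, $\bm{\varepsilon}_t$, $\bm{\epsilon}_t$; and you correctly identify the one genuinely delicate point, namely that the multiplicative noises scale the \emph{uncentered} signals $\hat{\bm{x}}_t$ and $\bm{x}_t$, so their contribution carries the full second moments $\bm{\Omega}_t^{\hat{\bm{x}}}+\E{\hat{\bm{x}}_t}\E{\hat{\bm{x}}_t}^\intercal$ and $\bm{\Omega}_t^{\bm{x}}+\E{\bm{x}_t}\E{\bm{x}_t}^\intercal$, which is precisely the origin of $\bar{\bm{\Omega}}_t^{\hat{\bm{x}}}$ and $\bar{\bm{\Omega}}_t^{\bm{x}}$ and of why the covariance recursion is coupled to the mean recursion. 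The placement of the two multiplicative blocks (control-dependent noise, driven by $\hat{\bm{x}}_t$, in the $\bm{x}$-block; state-dependent noise, driven by $\bm{x}_t$, in the $\hat{\bm{x}}$-block through $\bm{K}_t$) also matches the statement.
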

\begin{proof}
	See \cite[Lemma~2]{Karg.2023a}.
\end{proof}

Although Theorem~\ref{theorem:mean_covariance_SOC} gives recursive formulas to compute mean~$\E{\bm{x}_t}$ and covariance~$\bm{\Omega}_t^{\bm{x}}$ of the system state, i.e. in context of human movement modeling the average behavior and the variability patterns of the movements, unknown model parameters need to be identified from human measurement data before the model can be used. This leads to the inverse problem (cf.~Problem~\ref{problem:ISOC}) of the LQS optimal control Problem~\ref{problem:SOC}. From GT data (cf.~Definition~\ref{definition:GT_data}) that are assumed to be measured realizations of the stochastic processes of (in general only some) system states resulting in the LQS model with unknown GT parameters~$\bm{\theta}^*$ (cf.~Assumption~\ref{assumption:GT_data}), sufficient approximations of mean~$\hat{\bm{m}}_t$ and covariance values~$\hat{\bm{\Omega}}_t^{\bm{x}^*}$ are computed. Then, the ISOC problem aims at finding parameters~$\bm{\theta}^\bigtriangleup$ that yield a stochastic process $\{\bm{x}_t^{\bigtriangleup}\}$ with matching mean and covariance values to the GT data. Corollary~\ref{corollary:model_parameters} defines the parameter~$\bm{\theta}$ that influence the model predictions $\E{\bm{x}_t}$ and $\bm{\Omega}_t^{\bm{x}}$ and thus, need to be identified. Identifying cost function and noise scaling matrices from GT data distinguishes the ISOC problem from deterministic IOC problems, which only search for cost function parameters.
\begin{corollary} \label{corollary:model_parameters}
	Applying the optimal control strategy according to Lemma~\ref{lemma:solution_SOC} to the system defined by \eqref{eq:state_equation} and \eqref{eq:output_equation} leads to the mean $\E{\bm{x}_t}$~\eqref{eq:SensoSolutionEW} and covariance $\bm{\Omega}_t^{\bm{x}}$~\eqref{eq:SensoSolutionCOV}, which both depend on $\bm{\theta} = \mat{ \bm{s}^\intercal & \bm{\sigma}^\intercal }^\intercal \in \mathbb{R}^{\Theta}$ with $\bm{s}^\intercal = \mat{ s_{N,1} \!\! & \!\! \dots \!\! & \!\! s_{N,S_N} & s_{Q,1} \!\! & \!\! \dots \!\! & \!\! s_{Q,S_Q} & s_{R,1} \!\! & \!\! \dots \!\! & \!\! s_{R,S_R} }$ and $\bm{\sigma}^\intercal = \mat{ \vecs{\bm{\Sigma}^{\bm{\alpha}}}^\intercal & \vecs{\bm{\Sigma}^{\bm{\beta}}}^\intercal & \sigma_1^{\bm{u}} \!\! & \!\! \dots \!\! & \!\! \sigma_c^{\bm{u}} & \sigma_1^{\bm{x}} \!\! & \!\! \dots \!\! & \!\! \sigma_d^{\bm{x}} }$\footnote{We assume $\bm{\Sigma}^{\bm{\gamma}}=\bm{0}$ throughout the paper. Its consideration would be straightforward.}.
\end{corollary}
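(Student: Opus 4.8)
The plan is to establish the claimed dependence by tracing every quantity entering the mean recursion \eqref{eq:SensoSolutionEW} and the covariance recursion \eqref{eq:SensoSolutionCOV} back to either the fixed system data ($\bm{A}$, $\bm{B}$, $\bm{H}$, $\bm{F}_i$, $\bm{G}_i$, the horizon $N$, and the prescribed initial moments) or to the parameter vector $\bm{\theta}$. Since this statement is an immediate consequence of Theorem~\ref{theorem:mean_covariance_SOC}, the recursions are already at our disposal; what remains is a bookkeeping argument verifying that the only free quantities appearing in them are exactly those collected in $\bm{s}$ and $\bm{\sigma}$.

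First I would dispatch the explicit additive noise contributions in \eqref{eq:SensoSolutionCOV}. By their definitions $\bm{\Omega}^{\bm{\xi}} = \bm{\Sigma}^{\bm{\alpha}}{\bm{\Sigma}^{\bm{\alpha}}}^\intercal$ and $\bm{\Omega}^{\bm{\omega}} = \bm{\Sigma}^{\bm{\beta}}{\bm{\Sigma}^{\bm{\beta}}}^\intercal$ are fixed once $\vecs{\bm{\Sigma}^{\bm{\alpha}}}$ and $\vecs{\bm{\Sigma}^{\bm{\beta}}}$ are fixed, while $\bm{\Omega}^{\bm{\eta}} = \bm{\Sigma}^{\bm{\gamma}}{\bm{\Sigma}^{\bm{\gamma}}}^\intercal = \bm{0}$ under the standing assumption $\bm{\Sigma}^{\bm{\gamma}} = \bm{0}$. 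The terms $\bar{\bm{\Omega}}_t^{\hat{\bm{x}}}$ and $\bar{\bm{\Omega}}_t^{\bm{x}}$ depend on $\bm{\sigma}$ through the scalars $\sigma_i^{\bm{u}}$ and $\sigma_i^{\bm{x}}$, and otherwise only on the gains $\bm{L}_t$, $\bm{K}_t$ and on the moments themselves. Thus every object in \eqref{eq:SensoSolutionEW}--\eqref{eq:SensoSolutionCOV} that is not part of the fixed data is either an element of $\bm{\sigma}$ or a function of $\bm{L}_t$ and $\bm{K}_t$ via $\bm{\mathcal{A}}_t$ from \eqref{eq:mathcalA}.

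Second, and this is where the cost parameters enter, I would show that $\bm{L}_t$ and $\bm{K}_t$ are functions of $(\bm{s},\bm{\sigma})$ and the fixed data alone. The control gain \eqref{eq:SensoControlLawL} contains $\bm{R}$ and the coefficients $(\sigma_i^{\bm{u}})^2$ explicitly, together with $\bm{Z}_{t+1}^{\bm{x}}$ and $\bm{Z}_{t+1}^{\bm{e}}$; the filter gain \eqref{eq:SensoEstimatorK} contains $\bm{\Sigma}^{\bm{\beta}}{\bm{\Sigma}^{\bm{\beta}}}^\intercal$ and the coefficients $(\sigma_i^{\bm{x}})^2$ explicitly, together with $\bm{P}_t^{\bm{e}}$, $\bm{P}_t^{\hat{\bm{x}}}$ and $\bm{P}_t^{\hat{\bm{x}}\bm{e}}$. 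By the rank-one decompositions of $\bm{Q}_N$, $\bm{Q}_t$ and $\bm{R}$ with \emph{fixed} direction vectors $\bm{q}_{N,i}$, $\bm{q}_{Q,t,i}$, $\bm{q}_{R,i}$, these cost matrices are determined by the scalars gathered in $\bm{s}$. Appealing to the recursive formulas for $\bm{Z}_t^{\bm{x}}$, $\bm{Z}_t^{\bm{e}}$ and for $\bm{P}_t^{\hat{\bm{x}}}$, $\bm{P}_t^{\bm{e}}$, $\bm{P}_t^{\hat{\bm{x}}\bm{e}}$ from \cite{Todorov.2005}, a backward, respectively forward, induction shows that each of these matrices is assembled solely from the fixed data and from $\bm{Q}_N$, $\bm{Q}_t$, $\bm{R}$ and the noise scalings, hence is a function of $(\bm{s},\bm{\sigma})$.

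The main obstacle will be the coupling introduced by the alternating optimization between $\bm{L}_t$ and $\bm{K}_t$: since each sequence is computed under an assumed fixed value of the other, one must argue that the fixed point reached by the AO iteration is itself a well-defined function of $\bm{\theta}$ rather than of some auxiliary initialization. I would settle this by noting that the entire AO procedure is a deterministic map whose only inputs are the fixed system data and $\bm{\theta}$; no additional degrees of freedom are introduced during the iteration. Consequently $\bm{L}_t$, $\bm{K}_t$ and $\bm{\mathcal{A}}_t$ depend only on $(\bm{s},\bm{\sigma})$, and substituting them into \eqref{eq:SensoSolutionEW}--\eqref{eq:SensoSolutionCOV} together with the prescribed initial moments yields $\E{\bm{x}_t}$ and $\bm{\Omega}_t^{\bm{x}}$ as functions of $\bm{\theta}$, which establishes the claim.
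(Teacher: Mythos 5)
Your proposal is correct and takes essentially the same route as the paper's own proof: a dependency-tracing argument through the recursions of Theorem~\ref{theorem:mean_covariance_SOC}, showing that $\bm{\mathcal{A}}_t$, $\bm{L}_t$, $\bm{K}_t$, the Riccati-type matrices $\bm{Z}_t^{\bm{x}}$, $\bm{Z}_t^{\bm{e}}$, $\bm{P}_t^{\bm{e}}$ and the additive noise terms are determined solely by the fixed system data and the entries of $\bm{s}$ and $\bm{\sigma}$. Your version is more thorough than the paper's one-sentence chain (notably the explicit handling of the rank-one cost decompositions and the well-definedness of the alternating-optimization fixed point), but these are refinements of the same bookkeeping argument rather than a different approach.
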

\begin{proof}
	The dependencies can directly be derived from Theorem~\ref{theorem:mean_covariance_SOC}: \eqref{eq:SensoSolutionEW} and \eqref{eq:SensoSolutionCOV} depend on $\bm{\mathcal{A}}_t$ which in turn depends on $\bm{L}_t$; then, $\bm{L}_t$ on $\bm{R}$ and $\sigma_i^{\bm{u}}$ and via $\bm{Z}_t^{\bm{x}}$ on $\bm{Q}_N$, $\bm{Q}_t$, $\sigma_i^{\bm{x}}$ and $\bm{K}_t$, which finally depends on $\bm{\Sigma}^{\bm{\beta}}$ and via $\bm{P}_t^{\bm{e}}$ on $\bm{\Sigma}^{\bm{\alpha}}$.
\end{proof}
\comment{LQG: noise scaling matrices of state and output equation both influence covariance prediction of the system state (see notes), mean only influenced by cost function matrices ($\bm{L}_t$ only dependent on them, separation theorem) - LQS: mean only influenced by $\bm{L}_t$ (but here also dependent on noise scaling parameters, also in the fully observable case), covariance influenced by all parameters (cost function and noise scaling matrices, especially also the noise scaling parameters of the output equation, although their influence is somehow reduced by the design procedure of the filter matrices $\bm{K}_t$, but their influence is not suppressed, they introduce more ambiguity) - LQS (fully observable): here, one can see (and possibly show) that the output noise processes introduce more ambiguity which is however not necessary for describing arbitrary observed moments - theoretical statements (thesis): show influence on mean and covariance with simplifications of formulas (see notes), discuss LQG case; show that the fully observable case is enough to model arbitrary mean and covariance curves}

\begin{definition} \label{definition:GT_data}
	The GT data are realizations of an unknown stochastic process $\{\bm{M}\bar{\bm{x}}_t\}$, where $\bar{\bm{x}} \in \mathbb{R}^{n}$ and the system state $\bm{x}$ in \eqref{eq:state_equation} represent the same physical quantities. Furthermore, $\bm{M} \in \mathbb{R}^{\bar{n}\times n}$ follows from the identity matrix by deleting rows that correspond to states/quantities that are not measured in the GT data. Finally, $\hat{\bm{m}}_t$ and $\hat{\bm{\Omega}}^{\bm{x}^*}_t$ are mean and covariance values computed from the realizations of $\{\bm{M}\bar{\bm{x}}_t\}$.
\end{definition}

\begin{assumption} \label{assumption:GT_data}
	The GT data are realizations of the stochastic process $\{\bm{M}\bm{x}_t^*\}$, where $\{\bm{x}_t^*\}$ results from the application of the admissible control strategy that is optimal w.r.t. the GT parameters $\bm{\theta}^*$ (cf.~Lemma~\ref{lemma:solution_SOC}) to the system \eqref{eq:state_equation}, \eqref{eq:output_equation}: $\hat{\bm{m}}_t \approx \E{\bm{M}\bm{x}^*_t}$ and $\hat{\bm{\Omega}}^{\bm{x}^*}_t \approx \bm{M} \bm{\Omega}^{\bm{x}^*}_t \bm{M}^\intercal$ hold.
\end{assumption}

\begin{assumption} \label{assumption:model_structure}
	The matrices $\bm{A}$, $\bm{B}$, $\bm{F}_i$, $\bm{H}$ and $\bm{G}_i$ are known as well as the vectors $\bm{q}_{N,i}$, $\bm{q}_{Q,t,i}$ and $\bm{q}_{R,i}$.
\end{assumption}

\begin{problem} \label{problem:ISOC}
	Find parameters~$\bm{\theta}^{\bigtriangleup}$ such that the stochastic process $\{\bm{x}_t^{\bigtriangleup}\}$, which results from the application of the admissible control strategy that is optimal w.r.t. the parameters $\bm{\theta}^{\bigtriangleup}$ (cf.~Lemma~\ref{lemma:solution_SOC}) to the system \eqref{eq:state_equation}, \eqref{eq:output_equation}, yields $\E{\bm{M}\bm{x}^{\bigtriangleup}_t} = \hat{\bm{m}}_t$ and $\bm{M} \bm{\Omega}^{\bm{x}^{\bigtriangleup}}_t \bm{M}^\intercal = \hat{\bm{\Omega}}^{\bm{x}^*}_t$.
\end{problem}

\begin{remark} \label{remark:nonGaussian_states}
	Although the stochastic processes $\{\bm{\alpha}_t\}$, $\{\bm{\varepsilon}_t\}$, $\{\bm{\beta}_t\}$, $\{\bm{\epsilon}_t\}$ and $\{\bm{\gamma}_t\}$ in \eqref{eq:state_equation}, \eqref{eq:output_equation} and \eqref{eq:assm_linear_filter} are Gaussian, $\{\bm{x}_t\}$ is not, due to the multiplicative noise processes in \eqref{eq:state_equation} and \eqref{eq:output_equation} (see also \cite{Joshi.1976}).
\end{remark}
\comment{with the statistical moments predicted by the theorem one can derive a Gaussian approximation for every point in time $t$; however, the correlation between different points in time (which exists!) cannot be}


\section{Bi-level-based Inverse Stochastic Optimal Control} \label{sec:bilevel_ISOC}

In this section, we propose our new ISOC algorithm to solve Problem~\ref{problem:ISOC} which is based on a bi-level structure. Bi-level-based algorithms are common to solve inverse problems of optimal control (see e.g. \cite{Karg.2023b,Karg.2023a,Oguz.2018,Berret.2011,Mombaur.2010}). In general, in such methods a parameter optimization problem is defined where the objective function quantifies how well the trajectories, which are optimal w.r.t. a current guess of the cost function and the noise scaling parameters, match the observed trajectories in the GT data. Since the optimization of this objective function requires for each function evaluation the solution of a SOC problem, a so-called bi-level structure results. 
Typically, it is unknown or need to be verified if the GT trajectories are indeed a SOC solution, i.e. Assumption~\ref{assumption:GT_data} cannot be guaranteed. In these situations, it is inevitable to have a bi-level-based ISOC approach, at least as baseline method for more advanced ones, since only bi-level-based algorithms with global convergence behavior guarantee the best possible parameters w.r.t. the chosen metric to define the objective function of the parameter optimization problem.
\comment{in more advanced/indirect methods for data-based IOC, correlation between residual and trajectory error only present when (impractical) assumptions are fulfilled, e.g. in Hamilton method assumption of known basis functions and rank condition + optimization of residual error can be numerically ill-balanced and leads to higher trajectory errors than directly optimizing the trajectory error} 

In Subsection~\ref{subsec:parameter_opt_problem}, we define the parameter optimization problem of our new ISOC algorithm. Then, we explain our new approach to solve it in Subsection~\ref{subsec:upper_level_opt_TRLwARoA} (so-called upper level optimization) and how we treat its non-convexity. In Subsection~\ref{subsec:lower_level_opt}, the solution to the SOC problem, which is a constraint of the parameter optimization problem defined in Subsection~\ref{subsec:parameter_opt_problem} and called lower level optimization, is discussed.

\subsection{Parameter Optimization Problem} \label{subsec:parameter_opt_problem}

Problem~\ref{problem:ISOC_parameter_opt} defines the parameter optimization derived from Problem~\ref{problem:ISOC}. The objective function $J_{\ISOC}(\bm{\theta},\hat{\bm{m}}_t,\hat{\bm{\Omega}}_t^{\bm{x}^*})$ is based on the variance accounted for (VAF) metric, successfully applied for model regression with real data \cite{Karg.2023b,Kolekar.2018}. The optimization w.r.t. $\bm{\theta}$ is performed on a feasible set~$\mathcal{U}$ to guarantee the existence of a well-defined SOC solution in the lower level constituted by constraints \eqref{eq:constraint_SOC} and \eqref{eq:constraint_mean_covariance}.
\begin{definition} \label{definition:feasible_set}
	The feasible set $\mathcal{U}$ is given by 
	\begin{align} \label{eq:feasible_set}
		\mathcal{U} = \{\bm{\theta} \in \mathbb{R}^{\Theta}: \theta_i \in [a_i, b_i], \forall i \in \{1,\dots,\Theta\}\},
	\end{align}
	where $a_i, b_i \in \mathbb{R}$ and $b_i > a_i$ ($\forall i \in \{1,\dots,\Theta\}$), such that for every $\bm{\theta} \in \mathcal{U}$ Lemma~\ref{lemma:solution_SOC} can be applied\comment{implicitly we assume that convergence is given here; however, in future, see comment before, we should be able to change this here to the positive definiteness assumption on the matrices only} and such that $\bm{\theta}^* \in \mathcal{U}$ if Assumption~\ref{assumption:GT_data} holds\comment{depending on the specific element of $\bm{\theta}$ scaled versions of $\bm{\theta}^*$ are possible, but not in general, see e.g. noise scalings}.
\end{definition}

\begin{problem} \label{problem:ISOC_parameter_opt}
	Find a global optimizer $\bm{\theta}^{\bigtriangleup}$ of
	\begin{subequations} 
		\label{eq:ISOC_parameter_opt}
		\begin{align} 
			\min_{\bm{\theta}} \hphantom{-} &J_{\ISOC}(\bm{\theta},\hat{\bm{m}}_t,\hat{\bm{\Omega}}_t^{\bm{x}^*}) = \nonumber \\
			\min_{\bm{\theta}} - &\frac{ \bm{w}_{\text{m}}^\intercal \bm{m}^{\text{VAF}}(\bm{\theta},\hat{\bm{m}}_t) + \bm{w}_{\text{v}}^\intercal \vecs{\bm{\Omega}^{\text{VAF}}(\bm{\theta},\hat{\bm{\Omega}}^{\bm{x}^*}_t)} }{ \norm{\bm{w}_{\text{m}}}_1 + \norm{\bm{w}_{\text{v}}}_1 } \label{eq:J_ISOC} \\
			\text{s.t.} \enspace &\bm{\theta} \in \mathcal{U} \label{eq:constraint_feasible_set} \\
			&\bm{L}_t,\bm{K}_t = \arg \min_{\bm{L}_t,\bm{K}_t} J(\bm{\theta}) \enspace (\text{Lemma~\ref{lemma:solution_SOC}}) \label{eq:constraint_SOC} \\
			&\E{\bm{x}_t}\,\eqref{eq:SensoSolutionEW}, \bm{\Omega}_t^{\bm{x}}\,\eqref{eq:SensoSolutionCOV} \enspace \text{with} \enspace \bm{L}_t, \bm{K}_t,\bm{\theta} \enspace (\text{Theorem~\ref{theorem:mean_covariance_SOC}}) \label{eq:constraint_mean_covariance}
		\end{align}
	\end{subequations}
	where 
	\begin{align}
		m^{\text{VAF}}_i &= \left(1 - \frac{\sum_{t=0}^{N} \left(\left(\E{\bm{M}\bm{x}_t}\right)_i - \hat{m}_{i,t}\right)^2}{\sum_{t=0}^{N} \left(\hat{m}_{i,t} - \frac{1}{N+1}\sum_{t}\hat{m}_{i,t}\right)^2}\right), \label{eq:m_VAF} \\
		\Omega^{\text{VAF}}_{ij} &= \left(1 - \frac{\sum_{t=0}^{N} \left(\left(\bm{M}\bm{\Omega}^{\bm{x}}_{t}\bm{M}^{\intercal}\right)_{ij} - \hat{\Omega}_{ij,t}^{\bm{x}^*}\right)^2}{\sum_{t=0}^{N} \left(\hat{\Omega}_{ij,t}^{\bm{x}^*} - \frac{1}{N+1}\sum_{t}\hat{\Omega}_{ij,t}^{\bm{x}^*}\right)^2}\right) \label{eq:Omega_VAF}	
	\end{align}
	with $i,j \in \{1,\dots,\bar{n}\}$ and $\bm{w}_{\text{m}} \in \mathbb{R}^{\bar{n}}$ as well as $\bm{w}_{\text{v}} \in \mathbb{R}^{\bar{n}\bar{n}}$ are arbitrary weighting vectors\comment{to account, e.g. in real human measurement data, for a better approximation of the mean compared to the covariance values}.
\end{problem}

In general, $J_{\ISOC}(\bm{\theta},\hat{\bm{m}}_t,\hat{\bm{\Omega}}_t^{\bm{x}^*})$ is non-convex (cf.~Assumption~\ref{assumption:J_ISOC_nonconvex}). Furthermore, Assumption~\ref{assumption:J_ISOC_twcontdiff}\footnote{In references on bi-level-based algorithms (see e.g. \cite{Karg.2023a,Oguz.2018,Berret.2011,Mombaur.2010}), the necessity of derivative-free optimization solvers is postulated. However, with our definition of the optimization problem~\eqref{eq:ISOC_parameter_opt}, we show in Subsection~\ref{subsec:numerical_assm_validation} numerically that $J_{\ISOC}(\bm{\theta},\hat{\bm{m}}_t,\hat{\bm{\Omega}}_t^{\bm{x}^*})$ has indeed sufficient differentiability characteristics.} motivates the use of derivative-based optimization solvers with numerically approximated derivatives since analytical expressions of the derivatives cannot be given due to the bi-level structure (see \eqref{eq:constraint_SOC} and \eqref{eq:constraint_mean_covariance}). Finally, Lemma~\ref{lemma:global_solution_set} characterizes the set~$\mathcal{T}$ of global optimizers of \eqref{eq:ISOC_parameter_opt}.
\begin{assumption} \label{assumption:J_ISOC_nonconvex}
	$J_{\ISOC}(\bm{\theta},\hat{\bm{m}}_t,\hat{\bm{\Omega}}_t^{\bm{x}^*})$ is non-convex w.r.t. $\bm{\theta}$.
\end{assumption}

\begin{assumption} \label{assumption:J_ISOC_twcontdiff}
	$J_{\ISOC}(\bm{\theta},\hat{\bm{m}}_t,\hat{\bm{\Omega}}_t^{\bm{x}^*})$ is (at least) twice continuously differentiable w.r.t. $\bm{\theta}$.
\end{assumption}

\begin{definition} \label{definition:global_solutions}
	Let $\mathcal{T} \subseteq \mathcal{U}$ be the set of global optimizers of \eqref{eq:ISOC_parameter_opt}.
\end{definition}

\begin{lemma} \label{lemma:global_solution_set}
	Let Assumption~\ref{assumption:J_ISOC_twcontdiff} hold. Then, $\mathcal{T} \neq \emptyset$. Moreover, if Assumption~\ref{assumption:GT_data} holds additionally, every $\bm{\theta} \in \mathcal{T}$ solves Problem~\ref{problem:ISOC} and for every $\bm{\theta} \in \mathcal{T}$ $J_{\ISOC}(\bm{\theta},\hat{\bm{m}}_t,\hat{\bm{\Omega}}_t^{\bm{x}^*})=-1$. 
\end{lemma}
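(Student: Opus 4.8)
The plan is to decompose the statement into three claims and establish them in order: (i) $\mathcal{T} \neq \emptyset$; (ii) the global minimum value equals $-1$ under Assumption~\ref{assumption:GT_data}; and (iii) every global optimizer solves Problem~\ref{problem:ISOC}, which I would obtain as a byproduct of (ii). For (i) I would invoke the Weierstrass extreme value theorem. By Definition~\ref{definition:feasible_set}, $\mathcal{U}$ is a finite Cartesian product of closed intervals $[a_i,b_i]$, hence nonempty and compact, and for every $\bm{\theta}\in\mathcal{U}$ the lower-level constraints \eqref{eq:constraint_SOC}--\eqref{eq:constraint_mean_covariance} admit a well-defined solution (Lemma~\ref{lemma:solution_SOC} applies by construction of $\mathcal{U}$). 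After eliminating $\bm{L}_t,\bm{K}_t,\E{\bm{x}_t},\bm{\Omega}_t^{\bm{x}}$, the objective becomes a function of $\bm{\theta}$ alone that, by Assumption~\ref{assumption:J_ISOC_twcontdiff}, is in particular continuous on $\mathcal{U}$. A continuous function on a nonempty compact set attains its minimum, so $\mathcal{T}\neq\emptyset$.

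Next I would establish the uniform lower bound $J_{\ISOC}\ge -1$ on all of $\mathcal{U}$. The key observation is that each entry in \eqref{eq:m_VAF}--\eqref{eq:Omega_VAF} has the form $1-(\text{nonnegative ratio})$, because numerator and denominator are both sums of squares; hence $m^{\text{VAF}}_i\le 1$ and $\Omega^{\text{VAF}}_{ij}\le 1$. Taking the weights nonnegative, I get $\bm{w}_{\text{m}}^\intercal \bm{m}^{\text{VAF}} + \bm{w}_{\text{v}}^\intercal \vecs{\bm{\Omega}^{\text{VAF}}} \le \norm{\bm{w}_{\text{m}}}_1 + \norm{\bm{w}_{\text{v}}}_1$; dividing by the same positive normalizer and negating yields $J_{\ISOC}\ge -1$, with equality precisely when every positively weighted VAF entry equals $1$.

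For (ii) and (iii) I would evaluate the objective at the ground-truth parameters. Under Assumption~\ref{assumption:GT_data} we have $\bm{\theta}^*\in\mathcal{U}$ (Definition~\ref{definition:feasible_set}), and the model predictions at $\bm{\theta}^*$ reproduce the GT moments, $\E{\bm{M}\bm{x}^*_t}=\hat{\bm{m}}_t$ and $\bm{M}\bm{\Omega}^{\bm{x}^*}_t\bm{M}^\intercal=\hat{\bm{\Omega}}^{\bm{x}^*}_t$, so every numerator in \eqref{eq:m_VAF}--\eqref{eq:Omega_VAF} vanishes, all VAF entries equal $1$, and $J_{\ISOC}(\bm{\theta}^*)=-1$. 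Combined with the lower bound, the global minimum value is exactly $-1$, whence $J_{\ISOC}(\bm{\theta})=-1$ for every $\bm{\theta}\in\mathcal{T}$. Running the equality condition backwards, $J_{\ISOC}(\bm{\theta})=-1$ forces each VAF entry to equal $1$, i.e. the vanishing of every numerator, which is exactly $\E{\bm{M}\bm{x}^{\bigtriangleup}_t}=\hat{\bm{m}}_t$ and $\bm{M}\bm{\Omega}^{\bm{x}^{\bigtriangleup}}_t\bm{M}^\intercal=\hat{\bm{\Omega}}^{\bm{x}^*}_t$ for all $t$, the defining conditions of Problem~\ref{problem:ISOC}.

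The main obstacles are the hidden hypotheses that make the clean $-1$ work. First, the weights must be nonnegative for the bound $J_{\ISOC}\ge -1$ to hold and strictly positive for the backward implication in (iii) (otherwise an unweighted VAF entry need not reach $1$ at a minimizer); I would make this explicit rather than relying on the word ``weighting vectors.'' Second, the VAF denominators must be nonzero, i.e. the measured GT moments must vary in $t$, which is exactly what is needed for $J_{\ISOC}$ to be well-defined, so I would fold it into the well-definedness of the objective. Third, the value $-1$ is exact only if the ``$\approx$'' of Assumption~\ref{assumption:GT_data} is read as equality (idealized, sampling-error-free data); with genuine approximation the optimal value is merely close to $-1$, so I would state the result in the idealized regime.
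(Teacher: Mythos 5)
Your proposal is correct and follows the same route as the paper's proof: Weierstrass on the compact hyperrectangle $\mathcal{U}$ for $\mathcal{T}\neq\emptyset$, then evaluation of $J_{\ISOC}$ at the GT parameters $\bm{\theta}^*\in\mathcal{U}$ together with the structure of the VAF metric to conclude the optimal value is $-1$. However, you carry the argument further than the paper does, and the additions are substantive. The paper's proof only asserts that $-1$ is ``the global minimal value due to the used VAF metric'' and stops at $J_{\ISOC}(\bm{\theta})=-1$ for all $\bm{\theta}\in\mathcal{T}$; it never argues the remaining claim that every $\bm{\theta}\in\mathcal{T}$ solves Problem~\ref{problem:ISOC}. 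You supply exactly that missing step: the explicit lower bound $J_{\ISOC}\geq -1$ via $m^{\text{VAF}}_i,\Omega^{\text{VAF}}_{ij}\leq 1$, and the backward implication that equality forces every weighted VAF numerator to vanish, i.e. $\E{\bm{M}\bm{x}^{\bigtriangleup}_t}=\hat{\bm{m}}_t$ and $\bm{M}\bm{\Omega}^{\bm{x}^{\bigtriangleup}}_t\bm{M}^\intercal=\hat{\bm{\Omega}}^{\bm{x}^*}_t$. Your caveat about strict positivity of the weights is also a genuine catch, not pedantry: with a zero weight the corresponding moment entry is unconstrained at a minimizer, so the ``solves Problem~\ref{problem:ISOC}'' claim can fail; note that the paper's own numerical example uses $\bm{w}_{\text{v}}$ with zero entries, so the lemma as stated implicitly requires either positive weights or a weight-restricted reading of Problem~\ref{problem:ISOC}. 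Likewise, your points that the VAF denominators must be nonzero and that ``$\approx$'' in Assumption~\ref{assumption:GT_data} must be read as equality are hypotheses the paper uses silently.
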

\begin{proof}
	Since $\mathcal{U}$ is compact and $J_{\ISOC}(\bm{\theta},\hat{\bm{m}}_t,\hat{\bm{\Omega}}_t^{\bm{x}^*})$ continuous w.r.t. $\bm{\theta}$ (cf.~Assumption~\ref{assumption:J_ISOC_twcontdiff}), (at least) one global maximizer and minimizer exists for $J_{\ISOC}(\bm{\theta},\hat{\bm{m}}_t,\hat{\bm{\Omega}}_t^{\bm{x}^*})$ on $\mathcal{U}$. Hence, $\mathcal{T} \neq \emptyset$ follows. 
	Now, let Assumption~\ref{assumption:GT_data} hold additionally. First, we have $J_{\ISOC}(\bm{\theta}^*,\hat{\bm{m}}_t,\hat{\bm{\Omega}}_t^{\bm{x}^*})=-1$, which is the global minimal value due to the used VAF metric for $m_i^{\text{VAF}}$~\eqref{eq:m_VAF} and $\Omega_{ij}^{\text{VAF}}$~\eqref{eq:Omega_VAF}, and since $\bm{\theta}^* \in \mathcal{U}$ (see Definition~\ref{definition:feasible_set}), $\bm{\theta}^* \in \mathcal{T} \neq \emptyset$ follows and every $\bm{\theta} \in \mathcal{T}$ needs to yield $J_{\ISOC}(\bm{\theta},\hat{\bm{m}}_t,\hat{\bm{\Omega}}_t^{\bm{x}^*})=-1$.
\end{proof}

\begin{remark} \label{remark:existence_solution_set}
	If Assumption~\ref{assumption:GT_data} does not hold, $\mathcal{T} \neq \emptyset$ still holds according to Lemma~\ref{lemma:global_solution_set}. However, $J_{\ISOC}(\bm{\theta},\hat{\bm{m}}_t,\hat{\bm{\Omega}}_t^{\bm{x}^*})=-1$ cannot be guaranteed for $\bm{\theta} \in \mathcal{T}$. The elements of $\mathcal{T}$ yield the best possible fit of the LQS model to $\hat{\bm{m}}_t$ and $\hat{\bm{\Omega}}_t^{\bm{x}^*}$ computed with the GT data. Assumption~\ref{assumption:GT_data} is not fulfilled if either too few realizations are measured for sufficient approximations of mean and covariance values or the LQS model is not valid to describe the GT data.
\end{remark}

\subsection{Upper Level Optimization: Threshold Random Linkage Algorithm with Approximation of Regions of Attraction} \label{subsec:upper_level_opt_TRLwARoA}

In this subsection, we propose two algorithms, together with their convergence proofs, to solve Problem~\ref{problem:ISOC_parameter_opt}. Lemma~\ref{lemma:global_solution_set} shows that under Assumption~\ref{assumption:GT_data} these solutions solve Problem~\ref{problem:ISOC} as well. Algorithm~\ref{algorithm:TRLwARoA} is our main algorithm, influenced by \cite{Locatelli.1999,RinnooyKan.1987b} and called Threshold Random Linkage Algorithm with Approximation of Regions of Attraction (TRLwARoA). The main idea of our algorithms is to use a derivative-based constrained local optimization solver~$\Upsilon$ according to Definition~\ref{definition:localSolverRoA} that converges to a local minimizer of \eqref{eq:ISOC_parameter_opt}. Then, this local solver is combined with a globalization strategy to account for the non-convexity of $J_{\ISOC}(\bm{\theta})$.
\begin{definition} \label{definition:localSolverRoA}
	Let Assumption~\ref{assumption:J_ISOC_twcontdiff} hold and let a feasible set~$\mathcal{U}$ (cf.~Definition~\ref{definition:feasible_set}) be given. A constrained local optimization solver $\Upsilon: \mathcal{U} \rightarrow \mathcal{U}$ yields $\bm{\theta}_{\min} = \Upsilon(\bm{\theta}_0)$, where $\bm{\theta}_{\min} \in \Theta_{\min}$ and $\bm{\theta}_0 \in \Theta_{\RoA}(\Theta_{\min})$. Here, $\Theta_{\min}$ denotes a connected subset of all local minimizers $\{ \bm{\theta}_{\min} \in \mathcal{U}: \exists \mathcal{N}(\bm{\theta}_{\min}) \text{ such that } J_{\ISOC}(\bm{\theta}) \geq J_{\ISOC}(\bm{\theta}_{\min}), \forall \bm{\theta} \in \mathcal{N}(\bm{\theta}_{\min}) \}$ (with $\mathcal{N}(\bm{\theta}_{\min})$ as neighborhood of $\bm{\theta}_{\min}$) and $\Theta_{\RoA}(\Theta_{\min})$ the Region of Attraction (RoA) of $\Theta_{\min}$, i.e. $\Theta_{\RoA}(\Theta_{\min}) = \{\bm{\theta} \in \mathcal{U}: \exists \bm{\theta}_{\min} \in \Theta_{\min} \text{ such that } \forall \bm{\theta} \in \mathcal{N}(\bm{\theta}_{\min}) \, \exists \bm{\theta}_{\min}' \in \Theta_{\min}: \bm{\theta}_{\min}' = \Upsilon(\bm{\theta}) \}$.
\end{definition}
\comment{note: definition does not say anything about the non-emptiness of the RoA - the RoA of connected sets of local minima can be empty in general, important is only the non-emptiness of the RoA of $\mathcal{T}$, so from at least one connected subset of $\mathcal{T}$ - on the other side, the implementation of the local solver should always converge to a local minimum (if convergence occurred), should normally be the case; however, if convergence to only KKT point, i.e. e.g. saddle point, occurs, the algorithm still converges if sample point from non-empty RoA of $\mathcal{T}$ is chosen and $\gamma$, $v$ are sufficiently small ($v$ maintains here also approximation of RoA of that KKT point) - through the first-order optimality condition stopping criterion, in our case also the convergence based on small changes in the objective function is guaranteed, solver does not move inside connected set of local minima}

In Definition~\ref{definition:localSolverRoA}, we consider connected sets of local minima due to the well-known ambiguity of solutions of ISOC problems \cite{Karg.2023a,Molloy.2022}. For example, if Assumption~\ref{assumption:GT_data} is fulfilled, every $\bm{\theta}=\mat{ \lambda {\bm{s}^*}^\intercal & {\bm{\sigma}^*}^\intercal }^\intercal \in \mathcal{T}$ with $\lambda \in \mathbb{R}_{\geq 0}$ and such that $\bm{\theta} \in \mathcal{U}$. Hence, sufficient conditions for strict optimizers are normally not fulfilled in ISOC problems. The shape of the RoA~$\Theta_{\RoA}$ of a set $\Theta_{\min}$ depends on the concrete implementation of the used local solver. However, there are convergence results for basic implementations of SQP and interior-point (IP) algorithms (see e.g. \cite[Theorem~18.3, Theorem~19.1]{Nocedal.2006}) that are applicable for \eqref{eq:ISOC_parameter_opt}. Later we use an IP algorithm as local solver. Since $J_{\ISOC}(\bm{\theta})$ as well as the inequality constraints given by $\mathcal{U}$ are continuously differentiable and since the LICQ hold $\forall \bm{\theta} \in \mathcal{U}$ (cf.~Lemma~\ref{lemma:LICQ}), from \cite[Theorem~19.1]{Nocedal.2006} the convergence of IP algorithms to first-order optimal points follows. 
\begin{lemma} \label{lemma:LICQ}
	Let a feasible set~$\mathcal{U}$ according to Definition~\ref{definition:feasible_set} be given. The linear independence constraint qualification (LICQ) holds $\forall \bm{\theta} \in \mathcal{U}$.
\end{lemma}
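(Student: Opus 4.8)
The plan is to exploit the fact that $\mathcal{U}$ is defined purely by box constraints, so the active constraint gradients are (signed) canonical basis vectors, which are trivially linearly independent. First I would rewrite the set $\mathcal{U}$ in \eqref{eq:feasible_set} in standard nonlinear-programming form. Since $\theta_i \in [a_i,b_i]$ is equivalent to the two inequalities $\theta_i - b_i \le 0$ and $a_i - \theta_i \le 0$, the feasible set is described by the $2\Theta$ inequality constraint functions
\begin{align}
	g_i^{+}(\bm{\theta}) = \theta_i - b_i \le 0, \qquad g_i^{-}(\bm{\theta}) = a_i - \theta_i \le 0, \qquad i \in \{1,\dots,\Theta\},
\end{align}
and no equality constraints are present.

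Next I would compute the gradients. Writing $\bm{e}_i \in \mathbb{R}^{\Theta}$ for the $i$-th canonical basis vector, we have $\nabla g_i^{+}(\bm{\theta}) = \bm{e}_i$ and $\nabla g_i^{-}(\bm{\theta}) = -\bm{e}_i$ for every $\bm{\theta}$, i.e. the gradients are constant and each is, up to sign, a single standard basis vector. The key structural observation is then that at any feasible point $\bm{\theta} \in \mathcal{U}$ the two constraints associated with the same index $i$ cannot be active simultaneously: activity of $g_i^{+}$ means $\theta_i = b_i$ and activity of $g_i^{-}$ means $\theta_i = a_i$, and since Definition~\ref{definition:feasible_set} requires the strict inequality $b_i > a_i$, at most one of them can hold. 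Consequently the active set at $\bm{\theta}$ selects at most one gradient from each pair $\{\bm{e}_i,-\bm{e}_i\}$, so the collection of active gradients consists of signed basis vectors with pairwise distinct indices.

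Finally I would conclude that such a family of distinct (signed) canonical basis vectors is linearly independent, which is exactly the statement that the gradients of the active inequality constraints are linearly independent, i.e. the LICQ holds at $\bm{\theta}$. As $\bm{\theta} \in \mathcal{U}$ was arbitrary, LICQ holds $\forall \bm{\theta} \in \mathcal{U}$. I do not expect any genuine obstacle here; the only point requiring care is the strictness $b_i > a_i$ guaranteed by Definition~\ref{definition:feasible_set}, which is precisely what prevents both bounds of a coordinate from being active at once and thereby avoids the appearance of the pair $\{\bm{e}_i,-\bm{e}_i\}$, whose presence would destroy linear independence.
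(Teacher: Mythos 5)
Your proposal is correct and follows essentially the same route as the paper's own proof: both exploit the box-constraint structure of $\mathcal{U}$, identify the constraint gradients as the signed canonical basis vectors $\pm\bm{e}_i$, use $b_i > a_i$ to rule out simultaneous activity of the two bounds on the same coordinate, and conclude linear independence of the active gradients. The only (immaterial) difference is a sign convention, since the paper writes the constraints in the form $\theta_i - a_i \geq 0$, $-\theta_i + b_i \geq 0$ rather than your $\leq 0$ form.
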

\begin{proof}
	Since $\mathcal{U}$ is a hyperrectangle, all constraints are given by $\theta_i - a_i \geq 0$ and $-\theta_i + b_i \geq 0$ ($i \in \{1,\dots,\Theta\}$). For the set of their gradients $\{ \bm{e}_i, -\bm{e}_i, i \in \{1,\dots,\Theta\} \}$ ($\bm{e}_i$ as $i$-th standard basis vector of $\mathbb{R}^{\Theta}$) follows. Now, since $b_i > a_i$, the linear dependent vectors $\bm{e}_i$ and $-\bm{e}_i$ cannot correspond to active constraints at the same point. Thus, from the independence of $\bm{e}_i$ and $\bm{e}_j$ ($\forall i,j \in \{1,\dots,\Theta\}$ with $i\neq j$), we conclude the LICQ $\forall \bm{\theta} \in \mathcal{U}$.
\end{proof}

Suppose a constrained local optimization solver~$\Upsilon$ according to Definition~\ref{definition:localSolverRoA} is given, we can find a global optimizer $\bm{\theta}^{\bigtriangleup}$ of \eqref{eq:ISOC_parameter_opt} by starting it from $k_{\max}$ uniformly sampled points in $\mathcal{U}$. This leads to Algorithm~\ref{algorithm:PureMS} and Theorem~\ref{theorem:conv_pure_multistart}.
\begin{algorithm}[t] 
	\caption{Pure Multi-Start ISOC Algorithm.}
	\label{algorithm:PureMS}
	\DontPrintSemicolon
	\KwIn{$\hat{\bm{m}}_t$, $\hat{\bm{\Omega}}_t^{\bm{x}^*}$, $\mathcal{U}$, $k_{\max}$}
	\KwOut{$\bm{\theta}^{\bigtriangleup}$, $J_{\ISOC}(\bm{\theta}^{\bigtriangleup},\hat{\bm{m}}_t,\hat{\bm{\Omega}}_t^{\bm{x}^*})$}
	
	Draw $\bm{\theta}^{(1)},\dots,\bm{\theta}^{(k_{\max})}$ samples from uniform distribution in $\mathcal{U}$ \;
	Set $k=1$ and $J_{\OPT} = \infty$ \;
	\While{$k\leq k_{\max}$}{
		Compute $\bm{\theta}_{\min} = \Upsilon(\bm{\theta}^{(k)})$ with $\Upsilon$ applied to \eqref{eq:ISOC_parameter_opt} \;
		\If{$J_{\ISOC}(\bm{\theta}_{\min},\hat{\bm{m}}_t,\hat{\bm{\Omega}}_t^{\bm{x}^*})<J_{\OPT}$}{
			$\bm{\theta}^{\bigtriangleup} = \bm{\theta}_{\min}$ \;
			$J_{\text{OPT}} = J_{\ISOC}(\bm{\theta}_{\min},\hat{\bm{m}}_t,\hat{\bm{\Omega}}_t^{\bm{x}^*})$ \;
		}
		$k \leftarrow k+1$ \;
	}
	\Return{$\bm{\theta}^{\bigtriangleup}$, $J_{\ISOC}(\bm{\theta}^{\bigtriangleup},\hat{\bm{m}}_t,\hat{\bm{\Omega}}_t^{\bm{x}^*})$}
\end{algorithm}

\begin{theorem} \label{theorem:conv_pure_multistart}
	Let Assumption~\ref{assumption:J_ISOC_twcontdiff} hold and let a feasible set~$\mathcal{U}$ (cf.~Definition~\ref{definition:feasible_set}) as well as a constrained local optimization solver~$\Upsilon$ according to Definition~\ref{definition:localSolverRoA} be given. Algorithm~\ref{algorithm:PureMS} yields $\bm{\theta}^{\bigtriangleup} \in \mathcal{T}$ (cf.~Definition~\ref{definition:global_solutions}) if (at least) one sample point $\bm{\theta}^{(k)} \in \Theta_{\RoA}(\mathcal{T})$\comment{$\mathcal{T}$ could be the union of several connected sets of global minima}.
\end{theorem}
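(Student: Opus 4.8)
The plan is to reduce the claim to a sublevel-set characterization of $\mathcal{T}$ and then track the greedy update in the while-loop of Algorithm~\ref{algorithm:PureMS}. First I would record that Assumption~\ref{assumption:J_ISOC_twcontdiff} together with Lemma~\ref{lemma:global_solution_set} gives $\mathcal{T} \neq \emptyset$, so the global minimum value $J^\star := \min_{\bm{\theta} \in \mathcal{U}} J_{\ISOC}(\bm{\theta},\hat{\bm{m}}_t,\hat{\bm{\Omega}}_t^{\bm{x}^*})$ is attained and, by Definition~\ref{definition:global_solutions}, $\mathcal{T} = \{\bm{\theta} \in \mathcal{U} : J_{\ISOC}(\bm{\theta},\hat{\bm{m}}_t,\hat{\bm{\Omega}}_t^{\bm{x}^*}) = J^\star\}$. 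The point of this reformulation is that membership in $\mathcal{T}$ is equivalent to attaining $J^\star$, so it suffices to prove that the returned $\bm{\theta}^{\bigtriangleup}$ realizes this value.

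Next I would exploit the hypothesis. Let $k^\star$ be an index with $\bm{\theta}^{(k^\star)} \in \Theta_{\RoA}(\mathcal{T})$. Since every global minimizer is also a local minimizer, each connected component of $\mathcal{T}$ is an admissible $\Theta_{\min}$ in Definition~\ref{definition:localSolverRoA}, and $\Theta_{\RoA}(\mathcal{T})$ is understood as the union of the regions of attraction of these components. The defining convergence property of $\Upsilon$ then applies: $\bm{\theta}_{\min} = \Upsilon(\bm{\theta}^{(k^\star)}) \in \mathcal{T}$, hence $J_{\ISOC}(\bm{\theta}_{\min},\hat{\bm{m}}_t,\hat{\bm{\Omega}}_t^{\bm{x}^*}) = J^\star$.

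Finally I would close the argument with the bookkeeping. The variable $J_{\OPT}$ is monotonically updated to the smallest objective value encountered and $\bm{\theta}^{\bigtriangleup}$ to its argument; thus after iteration $k^\star$ one has $J_{\OPT} \leq J^\star$. Because $\Upsilon$ maps into $\mathcal{U}$, every evaluated candidate lies in $\mathcal{U}$ and cannot undercut $J^\star$, so $J_{\OPT} \geq J^\star$ holds throughout (trivially at initialization, where $J_{\OPT}=\infty$). At termination these inequalities force $J_{\ISOC}(\bm{\theta}^{\bigtriangleup},\hat{\bm{m}}_t,\hat{\bm{\Omega}}_t^{\bm{x}^*}) = J_{\OPT} = J^\star$, and the sublevel-set characterization yields $\bm{\theta}^{\bigtriangleup} \in \mathcal{T}$.

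I expect the only real obstacle to be conceptual rather than computational: Definition~\ref{definition:localSolverRoA} states the convergence guarantee for a single \emph{connected} set of minimizers, whereas $\mathcal{T}$ may decompose into several connected components. The clean resolution is the componentwise reading of $\Theta_{\RoA}(\mathcal{T})$ used above, under which landing in the RoA of any one component already forces $\Upsilon$ to return a point of $\mathcal{T}$; everything else is the routine invariant $J_{\OPT} \in [J^\star,\infty]$ maintained by the loop.
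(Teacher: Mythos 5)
Your proof is correct and follows essentially the same route as the paper, whose entire proof is the one-liner that $\bm{\theta}^{(k)} \in \Theta_{\RoA}(\mathcal{T})$ forces $\Upsilon(\bm{\theta}^{(k)}) \in \mathcal{T}$ by Definition~\ref{definition:localSolverRoA}. Your additions --- the sublevel-set characterization of $\mathcal{T}$, the monotone invariant $J_{\OPT} \in [J^\star,\infty]$, and the componentwise reading of $\Theta_{\RoA}(\mathcal{T})$ --- merely make explicit the bookkeeping and the interpretation that the paper leaves implicit (the latter even appearing as an author comment in the source), so they are welcome rigor but not a different argument.
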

\begin{proof}
	From Definition~\ref{definition:localSolverRoA}, we have $\bm{\theta}^{\bigtriangleup} = \Upsilon(\bm{\theta}^{(k)})$ with $\bm{\theta}^{(k)} \in  \Theta_{\RoA}(\mathcal{T})$.
\end{proof}

The probability of choosing a uniformly sampled point $\bm{\theta}^{(k)} \in \Theta_{\RoA}(\mathcal{T})$ is given by \cite{RinnooyKan.1987a}
\begin{align} \label{eq:probabilityUniSampling}
	P(\bm{\theta}^{(k)} \! \! \in \! \Theta_{\RoA}(\mathcal{T})) = 1 \! -\! \left( \! 1 \! - \! \frac{\vol{\Theta_{\RoA}(\mathcal{T})}}{\vol{\mathcal{U}}} \right)^{k_{\max}} \!,
\end{align}
where $\vol{\cdot}$ denotes the volume of the corresponding space. Hence, since $\mathcal{T} \neq \emptyset$ follows from Lemma~\ref{lemma:global_solution_set}, an appropriate local solver needs to guarantee $\vol{\Theta_{\RoA}(\mathcal{T})} \neq 0$. Then, $k_{\max} \rightarrow \infty$ guarantees $P(\bm{\theta}^{(k)} \in \Theta_{\RoA}(\mathcal{T})) \rightarrow 1$. 
Starting a local solver from every sample point $\bm{\theta}^{(k)}$ results in impractical computation times. Therefore, we include two filters to determine if a local solver is started at the current sample point $\bm{\theta}^{(k)}$. First, a distance filter is implemented motivated by \cite{Locatelli.1999}. A local solver is only started from $\bm{\theta}^{(k)}$ if the distance
\begin{align} \label{eq:delta}
	\delta^{(k)} = \begin{cases}
		\delta^{(k)}_c & \text{if } \exists i \! < \! k \!:\! J_{\ISOC}(\bm{\theta}^{(i)}) \! < \! J_{\ISOC}(\bm{\theta}^{(k)}) \\
		\infty & \text{otherwise}
	\end{cases},
\end{align}
where $\delta^{(k)}_c = \min_{i<k} \{ \norm{\bm{\theta}^{(k)}-\bm{\theta}^{(i)}}_2:  J_{\ISOC}(\bm{\theta}^{(i)}) <$ $J_{\ISOC}(\bm{\theta}^{(k)}) \}$,
to previously checked sample points $\bm{\theta}^{(i)}$ ($i\in\{1,\dots,k-1\}$) is greater than a threshold $\alpha$. Sample points with $J_{\ISOC}(\bm{\theta}^{(k)})\leq J_{\ISOC}(\bm{\theta}^{(i)})$ ($\forall i\in\{1,\dots,k-1\}$) are always used as starting points ($\delta^{(k)} = \infty$). With the distance filter, a sample point density in $\mathcal{U}$ independent from the size of $\mathcal{U}$ is maintained by adapting $\alpha$ to the size of $\mathcal{U}$. Suppose $\mathcal{U}$ is divided into hyperspheres~$\mathcal{S}$ with radius $\alpha$. Then, $\alpha$ is chosen such that the number of hyperspheres fitting into $\mathcal{U}$ is constant: $\const = \frac{\vol{\mathcal{U}}}{\vol{\mathcal{S}}}$. This leads to
\begin{align}
	\const &= \frac{\Gamma\left( \frac{\Theta}{2}+1 \right)\vol{\mathcal{U}}}{\pi^{\frac{\Theta}{2}}\alpha^{\Theta}} \nonumber \\
	\Rightarrow \alpha &= \gamma \left( \Gamma\left( \frac{\Theta}{2}+1 \right) \prod_{i=1}^{\Theta} (b_i - a_i) \right)^{\frac{1}{\Theta}}, \label{eq:alpha}
\end{align}
where $\gamma = \pi^{-\frac{1}{2}} \const^{-\frac{1}{\Theta}}$ is a tuning parameter for the distance filter and $\Gamma$ denotes the gamma function.

The second filter to further reduce the number of local solver starts is a RoA filter. If a local minimum is found, i.e. $\bm{\theta}_{\min}^{(l)} = \Upsilon(\bm{\theta}^{(k)})$, the RoA~$\Theta_{\RoA}(\bm{\theta}_{\min}^{(l)})$ is approximated by a hypersphere~$\{ \bm{\theta} \in \mathcal{U}: \norm{\bm{\theta}-\bm{\theta}_{\min}^{(l)}}_2 < v \delta_{\RoA}^{(l)}\}$, where $\delta_{\RoA}^{(l)} = \norm{\bm{\theta}^{(k)}-\bm{\theta}_{\min}^{(l)}}_2$ and $v$ denotes a tuning parameter guaranteeing Assumption~\ref{assumption:RoA_approximation}.

Overall, our TRLwARoA is given by Algorithm~\ref{algorithm:TRLwARoA} and in Theorem~\ref{theorem:conv_TRLwARoA}, its global convergence is shown.
\begin{algorithm}[t] 
	\caption{TRLwARoA ISOC Algorithm.}
	\label{algorithm:TRLwARoA}
	\DontPrintSemicolon
	\KwIn{$\hat{\bm{m}}_t$, $\hat{\bm{\Omega}}_t^{\bm{x}^*}$, $\mathcal{U}$, $k_{\max}$, $\gamma$, $v$}
	\KwOut{$\bm{\theta}^{\bigtriangleup}$, $J_{\ISOC}(\bm{\theta}^{\bigtriangleup},\hat{\bm{m}}_t,\hat{\bm{\Omega}}_t^{\bm{x}^*})$}
	
	Draw $\bm{\theta}^{(1)},\dots,\bm{\theta}^{(k_{\max})}$ samples from uniform distribution in $\mathcal{U}$ \;
	Set $k=1$, $l=0$ and $J_{\OPT} = \infty$ \;
	Compute $\alpha$~\eqref{eq:alpha} \;
	\While{$k\leq k_{\max}$}{
		Compute $\delta^{(k)}$~\eqref{eq:delta} \;
		\If{$\delta^{(k)} > \alpha \wedge \norm{\bm{\theta}^{(k)}-\bm{\theta}_{\min}^{(i)}}_2 \geq v\delta_{\RoA}^{(i)}, \forall i \in \{1,\dots,l\}$ }{
		$l \leftarrow l+1$ \;
		Compute $\bm{\theta}_{\min}^{(l)} = \Upsilon(\bm{\theta}^{(k)})$ with $\Upsilon$ to \eqref{eq:ISOC_parameter_opt} \;
		$\delta_{\RoA}^{(l)} = \norm{\bm{\theta}^{(l)}_{\min}-\bm{\theta}^{(k)}}_2$ \;
		\If{$J_{\ISOC}(\bm{\theta}_{\min}^{(l)},\hat{\bm{m}}_t,\hat{\bm{\Omega}}_t^{\bm{x}^*})<J_{\OPT}$}{
			$\bm{\theta}^{\bigtriangleup} = \bm{\theta}_{\min}^{(l)}$ \;
			$J_{\text{OPT}} = J_{\ISOC}(\bm{\theta}_{\min}^{(l)},\hat{\bm{m}}_t,\hat{\bm{\Omega}}_t^{\bm{x}^*})$ \;
		} 
		}
		$k \leftarrow k+1$ \;
	}
	\Return{$\bm{\theta}^{\bigtriangleup}$, $J_{\ISOC}(\bm{\theta}^{\bigtriangleup},\hat{\bm{m}}_t,\hat{\bm{\Omega}}_t^{\bm{x}^*})$}
\end{algorithm}

\begin{definition} \label{definition:optimal_level_set}
	Let $\mathcal{M}^{J_{\ISOC}}(m) = \{\bm{\theta}\in\mathcal{U}: J_{\ISOC}(\bm{\theta})\leq m \}$ with $m\in [-1,\infty)$ be the $m$-level set of $J_{\ISOC}$. The set $\mathcal{M}^{J_{\ISOC}}(m^*)$ is the optimal level set where $m^*$ is the highest value such that $\mathcal{T}$ are the only local minima in $\mathcal{M}^{J_{\ISOC}}(m^*)$.
\end{definition}

\begin{assumption} \label{assumption:RoA_approximation}
	For all $\bm{\theta}_{\min}^{(l)}$ found by Algorithm~\ref{algorithm:TRLwARoA} $\{ \bm{\theta} \in \mathcal{U}: \norm{\bm{\theta}-\bm{\theta}_{\min}^{(l)}}_2 < v \delta_{\RoA}^{(l)}\} \subseteq \Theta_{\RoA}(\bm{\theta}_{\min}^{(l)})$\comment{it is possible (and allowed) that more than one element of one connected set $\Theta_{\min}$ of local minima is found; the union of their RoA approximating hyperspheres approximates then $\Theta_{\RoA}(\Theta_{\min})$}.
\end{assumption}

\begin{theorem} \label{theorem:conv_TRLwARoA}
	Let Assumptions~\ref{assumption:J_ISOC_twcontdiff} and \ref{assumption:RoA_approximation} hold and let a feasible set~$\mathcal{U}$ (cf.~Definition~\ref{definition:feasible_set}) as well as a constrained local optimization solver~$\Upsilon$ according to Definition~\ref{definition:localSolverRoA} be given. Algorithm~\ref{algorithm:TRLwARoA} yields $\bm{\theta}^{\bigtriangleup} \in \mathcal{T}$ (cf.~Definition~\ref{definition:global_solutions}) if (at least) one sample point $\bm{\theta}^{(k)} \in \Theta_{\RoA}(\mathcal{T}) \cap \mathcal{M}^{J_{\ISOC}}(m^*)$.
\end{theorem}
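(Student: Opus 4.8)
The plan is to reduce the statement to the mechanism of Theorem~\ref{theorem:conv_pure_multistart}: it suffices to exhibit a single sample point that survives both filters of Algorithm~\ref{algorithm:TRLwARoA} and lies in $\Theta_{\RoA}(\mathcal{T})$, because applying $\Upsilon$ to such a point yields, by Definition~\ref{definition:localSolverRoA}, a $\bm{\theta}_{\min}^{(l)} \in \mathcal{T}$. Once a global optimizer is recorded it can never be discarded, since by Definition~\ref{definition:global_solutions} elements of $\mathcal{T}$ attain the least value of $J_{\ISOC}$ on $\mathcal{U}$, so the stopping condition $J_{\ISOC}(\bm{\theta}_{\min}^{(l)}) < J_{\OPT}$ can no longer trigger with a smaller value and $\bm{\theta}^{\bigtriangleup} \in \mathcal{T}$ on return. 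The whole difficulty is that the two filters may discard the sample point $\bm{\theta}^{(k)}$ supplied by the hypothesis, so the task is to show that discarding it is harmless.

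The key preliminary step is the inclusion $\mathcal{M}^{J_{\ISOC}}(m^*) \subseteq \Theta_{\RoA}(\mathcal{T})$. I would prove it using the descent character of $\Upsilon$: starting from any $\bm{\theta} \in \mathcal{M}^{J_{\ISOC}}(m^*)$, the solver produces iterates of non-increasing objective value that therefore remain in the sublevel set $\mathcal{M}^{J_{\ISOC}}(m^*)$ and, by Definition~\ref{definition:localSolverRoA}, converge to a local minimizer contained in it. By Definition~\ref{definition:optimal_level_set} the only local minimizers in $\mathcal{M}^{J_{\ISOC}}(m^*)$ belong to $\mathcal{T}$, whence $\bm{\theta} \in \Theta_{\RoA}(\mathcal{T})$.

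Next I would single out the sample point $\bm{\theta}^{(j)}$ of least objective value among $\bm{\theta}^{(1)},\dots,\bm{\theta}^{(k_{\max})}$. Since the hypothesis supplies some $\bm{\theta}^{(k)}$ with $J_{\ISOC}(\bm{\theta}^{(k)}) \leq m^*$, minimality gives $J_{\ISOC}(\bm{\theta}^{(j)}) \leq m^*$, so $\bm{\theta}^{(j)} \in \mathcal{M}^{J_{\ISOC}}(m^*) \subseteq \Theta_{\RoA}(\mathcal{T})$ by the inclusion. Because no sample point has a strictly smaller objective value than $\bm{\theta}^{(j)}$, definition~\eqref{eq:delta} gives $\delta^{(j)} = \infty > \alpha$, i.e. $\bm{\theta}^{(j)}$ passes the distance filter.

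It remains to treat the RoA filter at $\bm{\theta}^{(j)}$. If $\bm{\theta}^{(j)}$ also satisfies $\norm{\bm{\theta}^{(j)}-\bm{\theta}_{\min}^{(i)}}_2 \geq v\delta_{\RoA}^{(i)}$ for all earlier minima, then $\Upsilon(\bm{\theta}^{(j)})$ is computed and lies in $\mathcal{T}$ by the inclusion, and we are done. Otherwise $\bm{\theta}^{(j)}$ falls inside the approximating hypersphere of some $\bm{\theta}_{\min}^{(i)}$; Assumption~\ref{assumption:RoA_approximation} then yields $\bm{\theta}^{(j)} \in \Theta_{\RoA}(\bm{\theta}_{\min}^{(i)})$, and since $\Upsilon$ is single-valued the regions of attraction of distinct connected minimizer sets are disjoint, so $\bm{\theta}^{(j)} \in \Theta_{\RoA}(\mathcal{T}) \cap \Theta_{\RoA}(\bm{\theta}_{\min}^{(i)})$ forces $\bm{\theta}_{\min}^{(i)} \in \mathcal{T}$; that is, a global optimizer was already found and stored as $\bm{\theta}^{\bigtriangleup}$. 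I expect the inclusion of the second paragraph to be the main obstacle, since it hinges on the descent/monotonicity behavior of $\Upsilon$ that is only implicit in Definition~\ref{definition:localSolverRoA}, together with the care needed to argue disjointness of regions of attraction for connected sets of minimizers.
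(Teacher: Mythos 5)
Your argument stands or falls with the inclusion $\mathcal{M}^{J_{\ISOC}}(m^*) \subseteq \Theta_{\RoA}(\mathcal{T})$, and that is exactly where it breaks. This inclusion cannot be derived from Definition~\ref{definition:localSolverRoA}: that definition is a pure input--output characterization of $\Upsilon$ --- it only says that a point already lying in the region of attraction of a connected set of local minimizers is mapped into that set. It says nothing about iterates, nothing about monotone decrease of $J_{\ISOC}$ along them, and nothing about what $\Upsilon$ does on points that belong to no region of attraction; in particular, it does not guarantee that a point of $\mathcal{M}^{J_{\ISOC}}(m^*)$ lies in the RoA of \emph{any} set of local minimizers, let alone of one contained in that sublevel set. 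Your ``descent character of $\Upsilon$'' is thus an additional hypothesis, not something implicit: the solver actually used (an interior-point method with numerically approximated derivatives) need not decrease $J_{\ISOC}$ monotonically and may converge to KKT points that are not minimizers. Tellingly, the paper asserts this inclusion only as something that holds ``typically'' (Remark~\ref{remark:tuning_parameter_gamma}, with a footnote), and pointedly does \emph{not} use it in its proof of Theorem~\ref{theorem:conv_TRLwARoA}. Without it, your designated point $\bm{\theta}^{(j)}$ (the sample of least value) lies in $\mathcal{M}^{J_{\ISOC}}(m^*)$ but possibly not in $\Theta_{\RoA}(\mathcal{T})$, so $\Upsilon(\bm{\theta}^{(j)})$ may be a non-global minimizer; worse, $\bm{\theta}^{(j)}$ may then distance-filter the hypothesis point $\bm{\theta}^{(k)}$ via \eqref{eq:delta}, and your argument no longer produces any solver start inside $\Theta_{\RoA}(\mathcal{T})$ at all.

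The paper's proof avoids this dependence by arguing about the hypothesis point $\bm{\theta}^{(k)} \in \Theta_{\RoA}(\mathcal{T}) \cap \mathcal{M}^{J_{\ISOC}}(m^*)$ itself: the RoA filter cannot discard it, because regions of attraction of distinct connected sets of minimizers are disjoint and, by Assumption~\ref{assumption:RoA_approximation}, every approximating hypersphere lies inside a true RoA; and the distance filter cannot discard it on account of samples outside $\mathcal{M}^{J_{\ISOC}}(m^*)$, since those have strictly larger $J_{\ISOC}$ values and so can never play the role of the ``better'' point in \eqref{eq:delta}. Two elements of your write-up are genuinely valuable: your explicit case analysis of the RoA filter (if the point falls in the hypersphere of an earlier $\bm{\theta}_{\min}^{(i)}$, disjointness of RoAs forces $\bm{\theta}_{\min}^{(i)} \in \mathcal{T}$, so a global optimizer was already stored) is more careful than the paper's one-line claim, and your minimal-value-point device giving $\delta^{(j)} = \infty$ would neatly close the paper's own silence about earlier samples \emph{inside} $\mathcal{M}^{J_{\ISOC}}(m^*)$ --- but only in combination with the inclusion, which is precisely what is unavailable. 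As submitted, the proof rests on a lemma that is unprovable in the paper's framework, so it does not establish the theorem.
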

\begin{proof}
	We need to verify that $\bm{\theta}^{(k)} \in \Theta_{\RoA}(\mathcal{T}) \cap \mathcal{M}^{J_{\ISOC}}(m^*)$ is not filtered by the distance and RoA filter. Then, $\bm{\theta}^{\bigtriangleup} \in \mathcal{T}$ follows from Definition~\ref{definition:localSolverRoA}. First, since $\bm{\theta}^{(k)}$ cannot be in the RoA of different local minima, $\bm{\theta}^{(k)}$ is not filtered by the RoA filter due to Assumption~\ref{assumption:RoA_approximation} and $\bm{\theta}^{(k)} \in \Theta_{\RoA}(\mathcal{T})$. Furthermore, $\bm{\theta}^{(k)}$ is not filtered by the distance filter since $\forall \bm{\theta} \in \mathcal{M}^{J_{\ISOC}}(m^*)$ $J_{\ISOC}(\bm{\theta})\leq J_{\ISOC}(\bm{\theta}^{(i)})$ with $\bm{\theta}^{(i)} \notin \mathcal{M}^{J_{\ISOC}}(m^*)$ and uniformly sampled in $\mathcal{U}$.
\end{proof}

\begin{remark} \label{remark:tuning_parameter_gamma}
	Typically, $\mathcal{M}^{J_{\ISOC}}(m^*) \subseteq \Theta_{\RoA}(\mathcal{T})$\footnote{From Lemma~\ref{lemma:global_solution_set}, $\mathcal{T}\neq \emptyset$ and due to the assumed continuity of $J_{\ISOC}(\bm{\theta})$ (cf.~Assumption~\ref{assumption:J_ISOC_twcontdiff}), we have $\vol{\mathcal{M}^{J_{\ISOC}}(m^*)} \neq 0$}. Hence, by reducing the number of local solver starts with the distance filter, the requirement on the sampling procedure (at least one sample point~$\bm{\theta}^{(k)} \in \Theta_{\RoA}(\mathcal{T}) \cap \mathcal{M}^{J_{\ISOC}}(m^*)$) gets more restrictive. However, by lowering $\gamma$ this more restrictive assumption can be relaxed and for $\gamma \rightarrow 0$ $\bm{\theta}^{(k)} \in \Theta_{\RoA}(\mathcal{T})$ is sufficient (cf.~Theorem~\ref{theorem:conv_pure_multistart}). Thus, in case of problems with small $\vol{\mathcal{M}^{J_{\ISOC}}(m^*)}$ $\gamma$ can be used to maintain convergence under the best possible performance improvement of the distance filter.
\end{remark}

\begin{remark} \label{remark:local_solver}
	Theorem~\ref{theorem:conv_pure_multistart} and \ref{theorem:conv_TRLwARoA} show that the main requirement on the local solver~$\Upsilon$ (cf.~Definition~\ref{definition:localSolverRoA}) is to guarantee $\vol{\Theta_{\RoA}(\mathcal{T})} \neq 0$, which can be motivated by the convergence of e.g. basic IP methods to KKT points of \eqref{eq:ISOC_parameter_opt} (see before). Note that the algorithms do not depend on $\vol{\Theta_{\RoA}(\Theta_{\min})} \neq 0$ for every $\Theta_{\min}$. Furthermore, most implementations of constrained local optimization solvers will even guarantee convergence to local minima, e.g. due to corrections on Hessian approximations, which is beneficial for Algorithm~\ref{algorithm:TRLwARoA} due to the RoA filter.
\end{remark}

\subsection{Lower Level Optimization} \label{subsec:lower_level_opt}

The lower level of our bi-level-based ISOC algorithm is given by the constraints \eqref{eq:constraint_SOC} and \eqref{eq:constraint_mean_covariance} of \eqref{eq:ISOC_parameter_opt} and for their computation Assumption~\ref{assumption:model_structure} is needed. 

In general, the assumption on a linear non-adaptive filter in Lemma~\ref{lemma:solution_SOC} leads to suboptimal solutions to SOC Problem~\ref{problem:SOC}. However, exact solutions are still an open question. They exist for the fully observable case\footnote{The setup of \eqref{eq:ISOC_parameter_opt} for the fully observable case is straightforward and all our algorithms and statements can be applied as well.} ($\bm{y}_t = \bm{x}_t$) \cite{Todorov.2005,Beghi.1998} but in case of partial state information only suboptimal solutions were proposed (see e.g. \cite{Liang.2023,Todorov.2005,Moore.1999,Joshi.1976}). 
\comment{suboptimal solutions in the lower level of bi-level-based I(S)OC algorithms can lead to "false" friends (match with GT data of the suboptimal strategy although model does not fit; however, the trajectories derived by the suboptimal strategy suggest a match) or a rejection of the model hypothesis (although the model matches, but the suboptimal strategy hinders a fit)}


\section{Numerical Example} \label{sec:numerical_example}
After introducing an example system and the implementation of Algorithms~\ref{algorithm:PureMS} and \ref{algorithm:TRLwARoA} in Subsection~\ref{subsec:example_system}, in Subsection~\ref{subsec:numerical_assm_validation}, Assumptions~\ref{assumption:J_ISOC_nonconvex} and \ref{assumption:J_ISOC_twcontdiff} are validated numerically. Furthermore, Lemma~\ref{lemma:solution_SOC} is compared to other (suboptimal) control strategies for Problem~\ref{problem:SOC} and we show that $\vol{\Omega_{\RoA}(\mathcal{T})} \neq 0$ for our used IP implementation. Finally, in Subsection~\ref{subsec:tuning_para_analysis} and \ref{subsec:SoA_comparison} the new TRLwARoA ISOC algorithm is analyzed regarding its tuning parameters and compared to State-of-the-Art (SoA) algorithms.

\subsection{Example System and Implementation} \label{subsec:example_system}

Our simulation example is given by a planar point-to-point human hand movement, where the human hand is modeled as point mass. The exact definition can be found in \cite{Karg.2023a}. The GT parameters~$\bm{\theta}^*$ are given by: $\bm{\theta}^* = \mat{ {\bm{s}^*}^\intercal & {\bm{\sigma}^*}^\intercal }^\intercal$ with $\bm{s}^* = \mat{ 1 & 1 & 0.04 & 0.04 & 0.0004 & 0.0004 & \frac{1}{42} 10^{-5} & \frac{1}{42} 10^{-5} }^\intercal$ and $\bm{\sigma}^* = \mat{ \bm{0}_{1 \times 8}\!\!\!\! & \! 0.02 & \!0.02 & 0.2 & 0.2 & 1 & 1 & 0.5 & 0.1 }^\intercal$. From the GT parameters, $\hat{\bm{m}}_t$ and $\hat{\bm{\Omega}}_t^{\bm{x}^*}$ follow according to Lemma~\ref{lemma:solution_SOC} and Theorem~\ref{theorem:mean_covariance_SOC}. Hence, due to the simulation scenario Assumptions~\ref{assumption:GT_data} and \ref{assumption:model_structure} are fulfilled. Lastly, the lower bounds of the feasible set~$\mathcal{U}$ are defined by $a_i = 0$ ($\forall i \in \{1,\dots,6,9,\dots,16,23,24\}$) and $a_i = 10^{-10}$ ($\forall i \in \{7,8,17,\dots,22\}$) and $\bm{w}_{\text{m}} = \mat{ 0.9 & 0.9 & 0.9 & 0.9 }^\intercal$ and $\bm{w}_{\text{v}} = \mat{ 0.1 & \bm{0}_{1\times 4} & 0.1 & \bm{0}_{1\times 4} & 0.1 & \bm{0}_{1\times 4} & 0.1 }^\intercal$ are used to set up \eqref{eq:ISOC_parameter_opt}. 

The implementation of the algorithms was done in Matlab 2021b on a standard PC with a Ryzen 9 5950X. As local optimization solver~$\Upsilon$, the IP method of the \texttt{fmincon} solver of the Matlab environment is used. First and second order derivatives are numerically approximated. Both ISOC algorithms were implemented via parallel computing on the $16$ cores available on the CPU (scalable to more cores) by starting local optimizations on these cores in parallel.

\subsection{Numerical Validation of the Assumptions} \label{subsec:numerical_assm_validation}

First, we motivate the validity of Assumption~\ref{assumption:J_ISOC_twcontdiff}. Hereto, we compute the $276$ projections of $J_{\ISOC}(\bm{\theta})$ on the $\theta_i$-$\theta_j$-planes ($i,j \in \{1,\dots,24\}$, $j>i$). The value of $J_{\ISOC}(\bm{\theta})$ is calculated $\forall \bm{\theta} = \mat{ \dots \!\!\!\! & \theta_{i-1}^* & \!\!\! \theta_i \!\!\! & \theta_{i+1}^* & \!\!\!\! \dots \!\!\!\! & \theta_{j-1}^* & \!\!\! \theta_j \!\!\! & \theta_{j+1}^* & \!\!\!\! \dots }^\intercal$ with $\theta_{i/j} = \lambda \theta_{i/j}^*$ and $\lambda$ chosen in $101$ equidistant steps from $[0.5,1.5]$ if $\theta_{i/j}^* \neq 0$. If $\theta_{i/j}^* = 0$, $101$ equidistant steps for $\theta_{i/j}$ from $[0,1]$ are chosen. Finally, an evaluation of $J_{\ISOC}(\bm{\theta})$ at equidistant grid points for every $\theta_i$-$\theta_j$-plane follows (see red crosses in Fig.~\ref{fig:theta1theta2JISOC} for $\theta_1$-$\theta_2$-plane). Now, by fitting 2D polynomials of order five to the grid points of $J_{\ISOC}(\bm{\theta})$ for every $\theta_i$-$\theta_j$-plane, we achieve $R^2$ values of $>0.999$ ($R^2=0.99986$ for $\theta_1$-$\theta_2$-plane in Fig.~\ref{fig:theta1theta2JISOC}, minimal value of all polynomial fits). The quantitative and qualitative results (see surface plot in Fig.~\ref{fig:theta1theta2JISOC}) of this fitting procedure and the approximated shape of $J_{\ISOC}(\bm{\theta})$ through the evaluation at the grid points (the results for the other $\theta_i$-$\theta_j$-planes look very similar) provide strong indications that $J_{\ISOC}(\bm{\theta})$ is an analytical function like the polynomial fits. Thus, $J_{\ISOC}(\bm{\theta})$ can be assumed to be (at least) twice continuously differentiable (cf.~Assumption~\ref{assumption:J_ISOC_twcontdiff}). Fig.~\ref{fig:nonConvexJISOC} depicts the 1D projection of $J_{\ISOC}(\bm{\theta})$ at $\bm{\theta} = \bm{\theta}^* + \lambda (\bm{\theta}_{\min}^{(l)} - \bm{\theta}^*)$, where $\bm{\theta}_{\min}^{(l)}$ denotes a local minimum. It clearly shows the assumed non-convexity of $J_{\ISOC}(\bm{\theta})$ (cf.~Assumption~\ref{assumption:J_ISOC_nonconvex}).
\begin{figure}[t]
	\centering
	\begin{subfigure}[t]{1.5in}
		\includegraphics[width=1.5in]{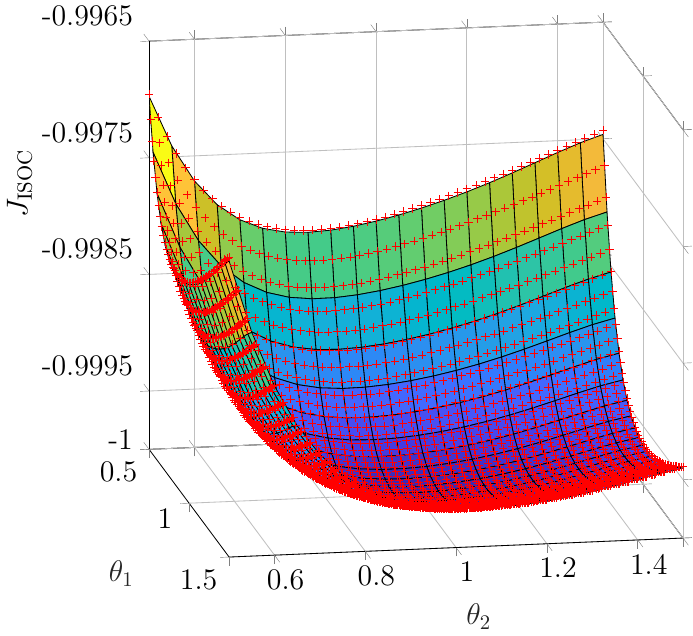}
		\caption{Evaluation of $J_{\ISOC}(\bm{\theta})$ at equidistant grid points in $\theta_1$-$\theta_2$-plane together with surface of polynomial fit.}
		\label{fig:theta1theta2JISOC}
	\end{subfigure}
	\begin{subfigure}[t]{1.5in}
		\includegraphics[width=1.5in]{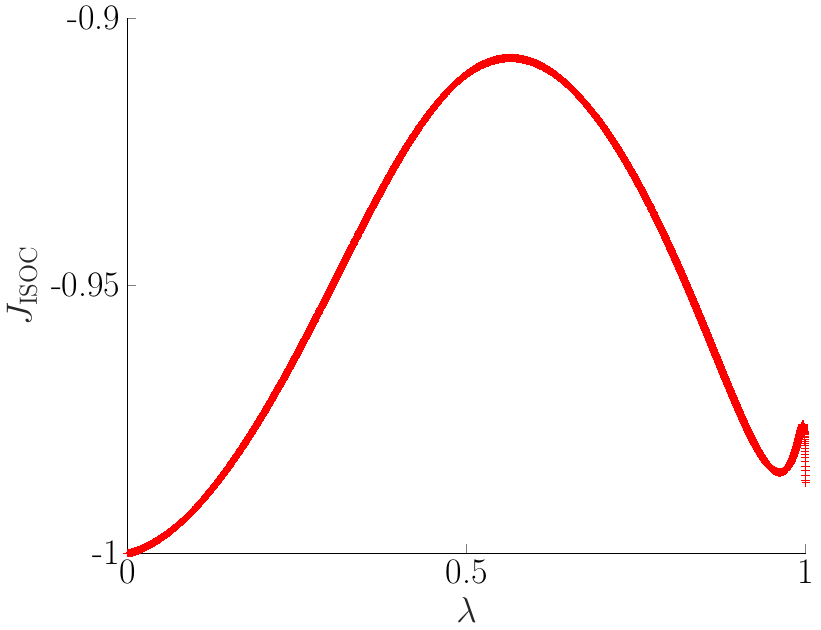}
		\caption{Evaluation of $J_{\ISOC}(\bm{\theta})$ at $\bm{\theta} = \bm{\theta}^* + \lambda (\bm{\theta}_{\min}^{(l)} - \bm{\theta}^*)$.}
		\label{fig:nonConvexJISOC}
	\end{subfigure}
	\caption{Validation of Assumptions~\ref{assumption:J_ISOC_nonconvex} and \ref{assumption:J_ISOC_twcontdiff}.}
	\label{fig:assumptionValidation}
\end{figure}

Next, we look at $\Theta_{\RoA}(\mathcal{T})$. By applying Algorithm~\ref{algorithm:PureMS} with $k_{\max} = 20000$ and $b_i=2$ ($\forall i \in \{1,\dots,24\}$) to the GT data, $692$ local solver runs converge to a $\bm{\theta}^{\bigtriangleup} \in \mathcal{T}$. Hence, $\vol{\Theta_{\RoA}(\mathcal{T}} \neq 0$ follows for the used IP implementation and furthermore, we approximate $\frac{\vol{\Theta_{\RoA}(\mathcal{T})}}{\vol{\mathcal{U}}} \approx \frac{692}{20000}=0.0346$ and have with $k_{\max}=10000$ $P(\bm{\theta}^{(k)} \in \Theta_{\RoA}(\mathcal{T})) \approx 1$ according to \eqref{eq:probabilityUniSampling}.

Finally, following the discussion on the suboptimality of the solution to the lower level optimization in Subsection~\ref{subsec:lower_level_opt}, Table~\ref{table:controlStrategyComparison} shows the comparison of two other suboptimal control strategies to the one of Lemma~\ref{lemma:solution_SOC}. The comparison is performed for $20$ parameters~$\bm{\theta}$: $\bm{\theta}^*$ and $\bm{\theta}$ with $\theta_i = \lambda \theta_i^*$ ($\lambda$ randomly from $[0.5,1.5]$) if $\theta_i^*\neq 0$, $\theta_i$ randomly from $[0,1]$ otherwise ($\forall i \in \{1,\dots,24\}$)\comment{for thesis: more parameters, i.e. around 100}. Both strategies can lead to an improvement as well as a worsening of $J$ with negligible relative change in average. Hence, Lemma~\ref{lemma:solution_SOC} is the best choice since it furthermore enables the use of Theorem~\ref{theorem:mean_covariance_SOC} due to its non-adaptiveness. 
\begin{table}[t]
	\centering
	\caption{Mean, maximal and minimal relative change~$\triangle J$ of performance criterion \eqref{eq:cost_function} with different control strategies. Strategy of Lemma~\ref{lemma:solution_SOC} (non-adaptive $\bm{K}_t$) serves as the reference value.}
	\label{table:controlStrategyComparison}
	\renewcommand{\arraystretch}{1.15}
	\begin{tabular}{|c|C{2.5cm}|C{2.5cm}|}
		\hline
		& Lemma~\ref{lemma:solution_SOC} with adaptive $\bm{K}_t$ \cite{Todorov.2005} vs. non-adaptive $\bm{K}_t$ & \cite{Moore.1999} vs. Lemma~\ref{lemma:solution_SOC} with non-adaptive $\bm{K}_t$ \\
		\hline
		$\triangle J^{\mean}$ & $-1.1\,\%$ & $1.4\,\%$ \\
		\hline
		$\triangle J^{\max}$ & $2.9\,\%$ & $5.7\,\%$ \\
		\hline
		$\triangle J^{\min}$ & $-7.0\,\%$ & $-3.8\,\%$  \\
		\hline
	\end{tabular}
\end{table}

\subsection{Analysis of the Tuning Parameters of Algorithm~\ref{algorithm:TRLwARoA}} \label{subsec:tuning_para_analysis}

In the following, we analyze the convergence behavior and the computation time of our new TRLwARoA ISOC Algorithm~\ref{algorithm:TRLwARoA} w.r.t. to different choices of its tuning parameters, i.e. upper bounds~$b_i$ of feasible set $\mathcal{U}$, tuning parameters~$\gamma$ of the distance filter and $v$ of the RoA filter. We set $k_{\max} = 10000$ since it is sufficient for $P(\bm{\theta}^{(k)} \in \Theta_{\RoA}(\mathcal{T})) \approx 1$ (see Subsection~\ref{subsec:numerical_assm_validation}). Table~\ref{table:tuningParameterRL} shows the parameter combinations evaluated, where $b_i$ is chosen equally $\forall i \in \{1,\dots,24\}$. With every tuning parameter combination, Algorithm~\ref{algorithm:TRLwARoA} was executed $10$ times. As evaluation metrics, the average computation time~$t_{\comp}^{\mean}$, the average number of started local solvers~$\# \Upsilon^{\mean}$ and the worst objective function value~$J_{\ISOC}^{\max}$ achieved during these $10$ runs are used. Furthermore, we define a run of Algorithm~\ref{algorithm:TRLwARoA} as converged to a global optimizer if for the achieved $J_{\ISOC}$ value $J_{\ISOC}\leq -0.999$ holds. The number of converged runs is denoted as $\# l$. Table~\ref{table:tuningParameterRL} shows $\# \Upsilon^{\mean}$, $\# l$ and $t_{\comp}^{\mean}$ for the evaluated tuning parameter combinations. With $v=0$ Assumption~\ref{assumption:RoA_approximation} is guaranteed and for $\gamma=0.6$ all $10$ runs converge. By increasing $v$ to $v=0.7$, Assumption~\ref{assumption:RoA_approximation} can still be considered as fulfilled ($\# l = 10$) and due to the RoA filter, performance is improved significantly: $\# \Upsilon^{\mean}$ and $t_{\comp}^{\mean}$ are nearly halved\footnote{Mean number of local solver starts~$\# \Upsilon^{\mean}$ is nearly independent from the choice of bounds due to the adaptive threshold~$\alpha$~\eqref{eq:alpha}.}\comment{higher computation times in case of $b_i = 20$ can result from higher computation times for local solvers, more complex local optimizations}. Now, by increasing $\gamma$ to $\gamma = 0.7$, $\# \Upsilon^{\mean}$ and $t_{\comp}^{\mean}$ are reduced by more than $80\,\%$ and $70\,\%$, respectively (for $v=0$ and $v=0.7$). However, with $\gamma=0.7$ convergence does only occur in $7-9$ of the $10$ runs, but a local minimum with an objective function value close to the global one is always achieved in the not-converged runs: $J_{\ISOC}^{\max}<-0.98$\comment{these nearly global minima lead to a very small $\vol{\mathcal{M}^{J_{\ISOC}}(m^*)}$, but indicate also that parameters exist that are nearly globally optimal and achieve w.r.t. to the measured states reasonable predictions; most likely such near global minima disappear when more states are measured; one can also define such near global solutions and then, $\vol{\mathcal{M}^{J_{\ISOC}}(m^*)}$ gets bigger and the requirement on the uniform sampling less restrictive}. Due to these near global minima, $\vol{\mathcal{M}^{J_{\ISOC}}(m^*)}$ is small in our case and we need to choose $\gamma$ according to Remark~\ref{remark:tuning_parameter_gamma}, sufficiently small but as large as possible. For $\gamma=0.8$ or $v=0.9$, $\# l \leq 5$ except for $\gamma=0.8$, $v=0$ and $b_i =2$ with $\# l = 7$. Hence, in these cases either the RoA filter ($v=0.9$) or the distance filter ($\gamma=0.8$) hinders convergence due to not fulfilled Assumption~\ref{assumption:RoA_approximation} or Remark~\ref{remark:tuning_parameter_gamma}. Noticeably, for $\gamma=0.8$ with $v=0$ or $v=0.7$ and for $\gamma=0.6$ with $v=0.9$, still $J_{\ISOC}^{\max}<-0.98$\comment{this shows some kind of robustness property of the algorithm; if the assumption on sample point in $\mathcal{M}^{J_{\ISOC}}(m^*)$ cannot be fulfilled, perhaps a slightly relaxed assumption on a near global minimum (see comment before) is fulfilled first before solutions get completely bad; this near global optimal solution yields perhaps still good enough performance regarding model predictions and so on}. Overall, the best tuning parameters for our example system are $v\leq 0.7$ and $\gamma=0.6-0.7$ independent from the choice of upper bounds $b_i$ ($\forall i \in \{1,\dots,24\}$).
\begin{table}[t]
	\centering
	\caption{Tuning parameter analysis for Algorithm~\ref{algorithm:TRLwARoA}.}
	\label{table:tuningParameterRL}
	\renewcommand{\arraystretch}{1.15}
	\begin{tabular}{ |c|c|c|c|c|c| }
		\hline
		\multicolumn{3}{|c|}{} & $\gamma=0.6$ & $\gamma=0.7$ & $\gamma=0.8$ \\
		\hline
		\parbox[t]{2mm}{\multirow{6}{*}{\rotatebox[origin=c]{90}{$v=0$}}} & \multirow{3}{*}{$b_i = 2$} & $\# \Upsilon^{\mean}$ & $719.8$ & $96.3$ & $24.6$ \\
		\cline{3-6}
		& & $\# l$ & $\boldsymbol{10}$ & $\boldsymbol{9}$ & $\boldsymbol{7}$ \\
		\cline{3-6}
		& & $t_{\comp}^{\mean}$ & $258.2\,\min$ & $42.1\,\min$ & $24.1\,\min$ \\
		\cline{2-6}
		& \multirow{3}{*}{$b_i = 20$} & $\# \Upsilon^{\mean}$ & $680.7$ & $86.4$ & $24.2$ \\
		\cline{3-6}
		& & $\# l$ & $\boldsymbol{10}$ & $\boldsymbol{8}$ & $\boldsymbol{3}$ \\
		\cline{3-6}
		& & $t_{\comp}^{\mean}$ & $313.2\,\min$ & $78.2\,\min$ & $48.9\,\min$ \\
		\hline
		\parbox[t]{2mm}{\multirow{6}{*}{\rotatebox[origin=c]{90}{$v=0.7$}}} & \multirow{3}{*}{$b_i = 2$} & $\# \Upsilon^{\mean}$ & $386.9$ & $69.1$ & $20.2$ \\
		\cline{3-6}
		& & $\# l$ & $\boldsymbol{10}$ & $\boldsymbol{9}$ & $\boldsymbol{5}$ \\
		\cline{3-6}
		& & $t_{\comp}^{\mean}$ & $137.0\,\min$ & $36.8\,\min$ & $14.3\,\min$ \\
		\cline{2-6}
		& \multirow{3}{*}{$b_i = 20$} & $\# \Upsilon^{\mean}$ & $419.8$ & $67.3$ & $20.5$ \\
		\cline{3-6}
		& & $\# l$ & $\boldsymbol{10}$ & $\boldsymbol{7}$ & $\boldsymbol{1}$ \\
		\cline{3-6}
		& & $t_{\comp}^{\mean}$ & $173.9\,\min$ & $39.2\,\min$ & $15.6\,\min$ \\
		\hline
		\parbox[t]{2mm}{\multirow{6}{*}{\rotatebox[origin=c]{90}{$v=0.9$}}} & \multirow{3}{*}{$b_i = 2$} & $\# \Upsilon^{\mean}$ & $33.3$ & $18.1$ & $9.1$ \\
		\cline{3-6}
		& & $\# l$ & $\boldsymbol{3}$ & $\boldsymbol{1}$ & $\boldsymbol{1}$ \\
		\cline{3-6}
		& & $t_{\comp}^{\mean}$ & $15.8\,\min$ & $14.4\,\min$ & $9.1\,\min$ \\
		\cline{2-6}
		& \multirow{3}{*}{$b_i = 20$} & $\# \Upsilon^{\mean}$ & $33.4$ & $13.3$ & $9.4$ \\
		\cline{3-6}
		& & $\# l$ & $\boldsymbol{0}$ & $\boldsymbol{1}$ & $\boldsymbol{2}$ \\
		\cline{3-6}
		& & $t_{\comp}^{\mean}$ & $17.9\,\min$ & $9.4\,\min$ & $8.0\,\min$ \\
		\hline
	\end{tabular}
\end{table}
\comment{due to implementation, sampling time as well as initial $J_{\ISOC}$ evaluation for all samples not included in TRL computation time; however, this only increases overall computation time by around $30\,\text{s}$ in all cases - integrate it to complete computation time in future}

\subsection{Comparison to State-of-the-Art-Algorithms} \label{subsec:SoA_comparison}

Table~\ref{table:SoAComparison} shows the comparison of the TRLwARoA ISOC algorithm to different SoA methods. First, the results of our grid-search (GS) and bi-level-based ISOC algorithm \cite{Karg.2023a} are included and second, we designed an additional new bi-level-based ISOC algorithm where the upper level optimization problem is solved by the Matlab implementation of the algorithm proposed in \cite{Ugray.2007}. Here, promising starting points for local solver runs in $\mathcal{U}$ are heuristically determined by a scatter search (ScS) algorithm. We set $k_{\max} = 10000$, $\gamma=0.7$ and $v=0$ for the TRLwARoA and $k_{\max}=5000$ (best performance of ScS) for the ScS algorithm. The parameters for the GS-based method as well as the upper bounds of the feasible set~$\mathcal{U}$ are the same as in \cite{Karg.2023a}. For the stochastic algorithms, TRLwARoA and ScS, $10$ runs were performed. Table~\ref{table:SoAComparison} highlights that only the TRLwARoA algorithm yields convergence in all $10$ runs. Moreover, it computes ISOC solutions ca. $3$ and $33$ times faster than the ScS and GS, respectively. Finally, Fig.~\ref{fig:trajectoriesRLGSSoA} illustrates the match with the GT data achieved by the parameters~$\bm{\theta}^{\bigtriangleup}$ identified with the TRLwARoA algorithm ($\bm{\theta}^{\bigtriangleup}$ with $J_{\ISOC}^{\max}$ chosen). 
\begin{table}[t]
	\centering
	\caption{Comparison of the TRLwARoA algorithm to the grid-search-based (GS) method \cite{Karg.2023a} and the algorithm based on the scatter search (ScS) procedure in \cite{Ugray.2007}.}
	\label{table:SoAComparison}
	\renewcommand{\arraystretch}{1.15}
	\begin{tabular}{|c|c|c|c|}
		\hline
		& TRLwARoA & ScS & GS \cite{Karg.2023a} \\
		\hline
		$\# l$ & $\boldsymbol{10}$ & $\boldsymbol{8}$ & $-$\\
		\hline
		$J_{\ISOC}^{\max}$ & $-0.9993$ & $-0.9986$ & $-0.9987$ \\
		\hline
		$t_{\comp}^{\mean}$ & $36.6\,\min$ & $114.2\,\min$ & $1194.0\,\min$\\
		\hline
	\end{tabular}
\end{table}
\comment{future: repeat GS algorithm on same hardware for more cleaner results}

\begin{figure}[t]
	\centering
	\includegraphics[width=3in]{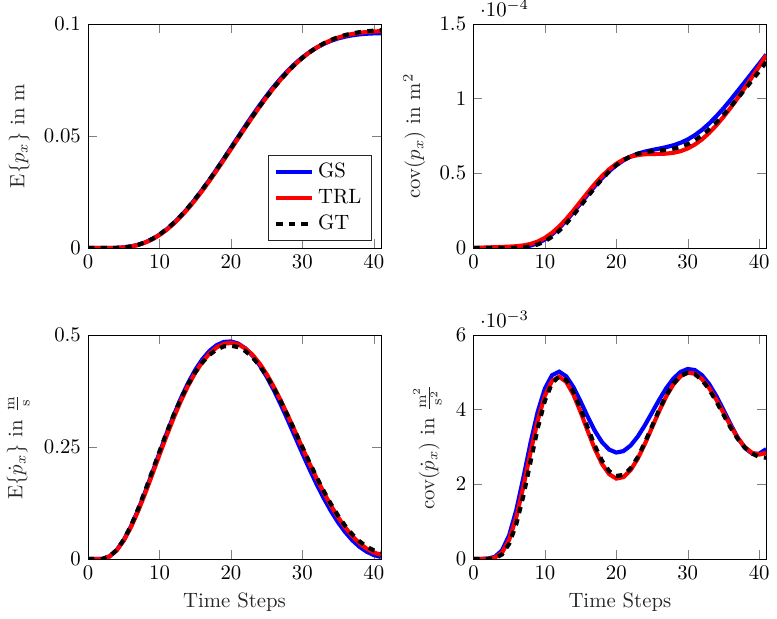}
	\caption{Mean and covariance of position~$p_x$ and velocity~$\dot{p}_x$ of the human hand in the example system. Values achieved with GT parameters and parameters identified with the TRLwARoA algorithm and the method in \cite{Karg.2023a} are shown.}
	\label{fig:trajectoriesRLGSSoA}
\end{figure}


\section{Conclusion} \label{sec:conclusion}
In this paper, we present a new algorithm to solve the ISOC problem for the LQS control model. The ISOC problem consists of determining cost function and noise scaling parameters from measurement (GT) data. Both parameter types influence the model predictions (statistical moments of the system quantities) of the LQS model. We overcome convergence problems and high computation times of our previous method \cite{Karg.2023a} by proving global convergence and achieving computation times that are nearly $33$ times smaller. Based on simulations, the tuning parameters of our new algorithm are analyzed and shown to be intuitively tuneable. Furthermore, the theoretical assumptions are validated numerically. In the future, we are going to apply our newly developed algorithm to real human measurement data. 


\bibliographystyle{IEEEtran}

\bibliography{IEEEabrv,References3}

\end{document}